\documentclass[12pt,letterpaper]{article}

\usepackage[utf8x]{inputenx}
\usepackage[T1]{fontenc}
\usepackage{textcomp}
\usepackage{lmodern}

\usepackage{soul}
\usepackage{verbatim}

\usepackage{amsthm, amsmath, amssymb}
\usepackage{amsfonts}
\usepackage{mathcomp}
\usepackage{mathrsfs}
\usepackage{euscript}

\usepackage{graphics}
\usepackage{graphicx}
\usepackage{caption}
\usepackage{booktabs}
\usepackage{floatrow} %%%%% must not load neither float nor rotfloat
\usepackage{subfig}
\usepackage{psfrag}
\usepackage{epic,eepic}
\usepackage{color}
\usepackage[table]{xcolor}
\usepackage{tikz}

\usepackage{cite}
\usepackage{enumerate}
\usepackage{paralist}
\usepackage{amsrefs}   %%%% must be loaded after hyperref

\usepackage{fullpage}

%\usepackage[right]{showlabels} %final
%\usepackage[left, mathlines]{lineno}

%\renewcommand{\showlabelfont}{\tiny}

% use these commands for typesetting doi and arXiv references in the bibliography

% declare theorem-like environments
\theoremstyle{plain}
\newtheorem{thm}{Theorem}
\newtheorem{lem}[thm]{Lemma}

\newtheorem{prop}[thm]{Proposition}

\theoremstyle{definition}

\newtheorem{conj}[thm]{Conjecture}

\theoremstyle{remark}

\def\ga{{G(\aa)}}
\def\ga{{G_\aa}}
\def\gam{{G(\aa),w_m}}
\def\gar{{G(\rr),w_m}}
\def\gam{{G_\aa,w_m}}
\def\gar{{G_\rr,w_m}}
\def\dd{{\mathcal D}}

\def\aa{{\mathcal A}}
\def\rr{{\mathcal R}}

\def\real{{\mathbb{R}}}
\def\natu{{\mathbb{N}}}

\def\sphe{{\mathbb{S}}}
\def\lar{{\leftrightarrow}}
\def\ucr#1{{\operatorname{cr}(#1)}}
\def\rcr#1{{\overline{\operatorname{cr}}(#1)}}
\def\pcr#1{{\widetilde{\operatorname{cr}}(#1)}}
\def\tucr{{\operatorname{cr}}}
\def\trcr{{\overline{\operatorname{cr}}}}
\def\tpcr{{\widetilde{\operatorname{cr}}}}
\def\moncr#1{{{\operatorname{mon-cr}}(#1)}}

\def\pp{{\mathcal P}}

\def\ignore#1{{}}

\newcommand*{\bqed}{\hfill\ensuremath{\blacksquare}}

% *****************************************************************************

\title{\bf On the pseudolinear crossing number}

\author{C\'esar Hern\'andez-V\'elez\thanks{Instituto de Matem\'atica e Estat\'{\i}stica, Universidade de S\~ao Paulo. S\~ao Paulo, Brasil 05508-090. {\em E-mail:} {\tt israel@ime.usp.br}. Supported by FAPESP (Proc. 2012/24597-3).}
\and
Jes\'us Lea\~nos \thanks{Unidad Acad\'emica de Matem\'aticas, Universidad Aut\'onoma de Zacatecas. Zacatecas, M\'exico, 98000. {\em E-mail}: {\tt jleanos@matematicas.reduaz.mx}. Supported by CONACyT Grant 179867.}
\and
Gelasio Salazar\thanks{Instituto de F\'{\i}sica,
Universidad Aut\'onoma de San Luis Potos\'{\i}. 
San Luis Potos\'{\i}, M\'exico. {\em E-mail:} {\tt gsalazar@ifisica.uaslp.mx}. Supported by CONACyT Grant 106432.
}}

\begin{document}

%\linenumbers

\maketitle

\begin{abstract}
  A drawing of a graph is {\em pseudolinear} if there is a pseudoline arrangement such that each pseudoline contains exactly one edge of the drawing. The {\em pseudolinear crossing number} $\pcr{G}$ of a graph $G$ is the minimum number of pairwise crossings of edges in a pseudolinear drawing of $G$. We establish several facts on the pseudolinear crossing number, including its computational complexity and its relationship to the usual crossing number and to the rectilinear crossing number. This investigation was motivated by open questions and issues raised by Marcus Schaefer in his comprehensive survey of the many variants of the crossing number of a graph.

\bigskip\noindent {\small \textbf{Keywords:}} {\small pseudoline arrangements, crossing number, pseudolinear crossing number, \hglue 2.2 cm rectilinear crossing number}

\bigskip\noindent {\small \textbf{MSC 2010:}} {\small 05C10, 52C30, 68R10, 05C62}

\end{abstract} 

% ******************************************************************************

\section{Introduction}
\label{sec:intro}

In his comprehensive survey of the many variants of the crossing number of a graph, Schaefer~\cite{survey} brought up several issues regarding the pseudolinear crossing number, including its computational complexity and its relationship to other variants of crossing number. Our aim in this paper is to settle some of these issues.

A {\em pseudoline} is a simple closed curve in the projective plane $\mathbb{P}^2$ which does not disconnect $\mathbb{P}^2$. A {\em pseudoline arrangement} is a set of pseudolines that pairwise intersect (necessarily, cross) each other exactly once.

Let $\dd$ be a drawing of a graph $G$ in the plane, and let $C$ be a disk containing $\dd$. By identifying antipodal points on the boundary of $C$ and discarding $\mathbb{R}^2 \setminus C$ we may regard $\dd$ as lying in $\mathbb{P}^2$. If each edge can be extended to a pseudoline so that the result is a pseudoline a\-rran\-ge\-ment, then $\dd$ is a {\em pseudolinear drawing}. The {\em pseudolinear crossing number $\pcr{G}$} of $G$ is the minimum number of pairwise crossings of edges in a pseudolinear drawing of $G$.

We recall that the {\em crossing number $\ucr{G}$} of a graph $G$ is the minimum number of pairwise crossings of edges in a drawing of $G$ in the plane. A drawing in which each edge is a straight line segment is a {\em rectilinear} drawing. The {\em rectilinear crossing number $\rcr{G}$} of $G$ is the minimum number of pairwise crossings of edges in a rectilinear drawing of $G$. A rectilinear drawing is clearly pseudolinear. Since pseudolinear and rectilinear drawings are restricted classes of drawings, it follows that for any graph $G$ we have $\ucr{G} \le \pcr{G}\leq \rcr{G}$.

The decision problem {\sc CrossingNumber}, which takes as input a graph $G$ and an integer $k$, and asks if $\ucr{G}\le k$, is NP-complete~\cite{GaJo83}. It is not difficult to prove that {\sc RectilinearCrossingNumber} (the corresponding variant for $\rcr{G}$) is NP-hard (cf.~Lemma~\ref{lem:pcrisNPhard} below). Bienstock's reduction from {\sc Stretchability} to {\sc RectilinearCrossingNumber}~\cite{Bi91} implies that computing the rectilinear crossing number is $\exists\real$-complete (see Section~\ref{sub:proofs}).

In~\cite{survey}, Schaefer listed the complexity of {\sc PseudolinearCrossingNumber} (the corresponding variant for $\pcr{G})$ as an open problem. Here we settle this question as follows.

\begin{thm} \label{thm:np-complete}
{\sc PseudolinearCrossingNumber} is NP-complete.
\end{thm}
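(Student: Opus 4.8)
The plan is to establish membership in NP and NP-hardness separately. For membership, the key point is that every graph with a pseudolinear drawing of at most $k$ crossings has one in which the combinatorial structure — the underlying simple arrangement of the $|E(G)|$ pseudolines extending the edges, together with how vertices and edge-portions lie in its cells — can be described by polynomially much data. A pseudoline arrangement of $m$ pseudolines is captured (up to isotopy) by its \emph{allowable sequence} / \emph{wiring diagram}, equivalently by an oriented-matroid chirotope of rank $3$, which has size polynomial in $m$; and given such a combinatorial certificate one can verify in polynomial time (by the standard realizability-free local-axiom check for rank-$3$ oriented matroids, which are always realizable as pseudoline arrangements by the Folkman--Lawrence topological representation theorem) that it is consistent, that each edge of $G$ is assigned its own pseudoline, that the vertex incidences are respected, and that the number of edge-crossings induced is at most $k$. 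So a certificate is: the wiring diagram of the arrangement plus the cyclic placement data of the vertices; all polynomial-size, all polynomial-time checkable. This gives {\sc PseudolinearCrossingNumber} $\in$ NP.

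For NP-hardness I would reduce from {\sc CrossingNumber}, which is NP-complete by~\cite{GaJo83}. The obstacle is that $\ucr{G}$ and $\pcr{G}$ can differ, so one cannot reduce $G\mapsto G$ directly; instead I would take the standard graph-"thickening" gadget used in Bienstock--Tucker-type arguments: replace each edge of $G$ by a suitably large "grid" or "wall" so that in the resulting graph $G'$ the cheapest drawings are forced to be nearly planar along each gadget, which pins down the rotation system and makes every edge of each gadget essentially straight in any near-optimal drawing. One shows $\ucr{G'} = f(\ucr{G})$ for an explicit polynomial $f$, \emph{and} that an optimal (or near-optimal) drawing of $G'$ can be taken pseudolinear, so that $\pcr{G'} = f(\ucr{G})$ as well; then $\pcr{G'}\le f(k)$ iff $\ucr{G}\le k$. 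The clean way to get the "can be taken pseudolinear" half is to note that the gadgets force the drawing to be locally like a planar grid, where each edge is already a short monotone arc extendable to a pseudoline, and the finitely many crossing edges between gadgets can be routed as straight segments; assembling these local pseudolines into a global arrangement is where one must be careful.

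The main obstacle I expect is precisely this last assembly step: showing that the near-optimal drawings produced by the gadget reduction are not merely \emph{good} but actually \emph{pseudolinear}, i.e. that the per-edge curves extend simultaneously to a genuine pseudoline arrangement in $\mathbb{P}^2$ (pairwise crossing exactly once). One route is to observe that a drawing in which every edge is an $x$-monotone arc and every pair of edges crosses at most once is automatically pseudolinear — each $x$-monotone arc extends to an $x$-monotone curve, i.e. a pseudoline, and two $x$-monotone curves cross exactly once — so it suffices to engineer the gadgets and the inter-gadget routing to be globally $x$-monotone with pairwise at most one crossing, which the grid/wall structure makes plausible. If that global monotonicity is too rigid, the fallback is to prove directly that the specific optimal drawing constructed for $G'$ is \emph{stretchable at the pseudoline level}: its edge-extensions form a simple arrangement whose associated rank-$3$ oriented matroid one exhibits explicitly. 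Either way, NP-hardness combined with the NP-membership above yields NP-completeness.
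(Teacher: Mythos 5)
Your NP-membership argument is essentially sound and in the same spirit as the paper's: the paper's certificate is a planarized ``pseudolinear model'' (the plane graph obtained from the drawing together with its extending arrangement, with crossings turned into degree-$4$ special vertices), while you propose a wiring-diagram/chirotope encoding plus vertex-placement data; both are polynomial-size and polynomial-time checkable. One detail you should repair: the arrangement extending a pseudolinear drawing is in general \emph{not} simple, since all pseudolines extending the edges incident with a vertex of degree at least three pass through that vertex, so your certificate must allow degenerate (non-uniform) arrangements and must record where along each pseudoline the endpoints of its edge lie, so that the verifier counts only crossings between edge portions.

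The genuine gap is in NP-hardness. What you give is a plan, not a proof: everything rests on the claim that the gadget-thickened graph $G'$ has an optimal (or near-optimal) drawing that is pseudolinear, and you yourself flag this ``assembly step'' as unresolved. Your proposed patch --- that an $x$-monotone drawing in which every two edges cross at most once is automatically pseudolinear --- is asserted with a false justification: two $x$-monotone bi-infinite curves need not cross exactly once (two function graphs may cross many times, or not at all in the affine plane), so extending the arcs to a genuine pseudoline arrangement is precisely the nontrivial step, and you give no argument for it. The paper sidesteps all of this with a much lighter reduction from {\sc CrossingNumber}: take a crossing-minimal drawing of $G$, planarize it, apply F\'ary's theorem to redraw the planarization with straight segments, and subdivide each edge $e$ twice per crossing of $e$ (at most $2|E(G)|$ subdivisions per edge). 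The subdivided graph $G'$ then has a rectilinear drawing with $\ucr{G}$ crossings, and since subdivision does not change the crossing number, $\ucr{G}=\ucr{G'}\le \pcr{G'}\le \rcr{G'}\le \ucr{G}$, so $\pcr{G'}=\ucr{G}$ exactly; hence ``$\pcr{G'}\le k$?'' decides ``$\ucr{G}\le k$?'', with no gadget analysis or pseudoline-assembly argument needed (and the same reduction gives NP-hardness of the rectilinear and monotone variants for free). Until you either supply the missing pseudolinearity argument for your gadgets or switch to such a subdivision reduction, the hardness half of your proof is incomplete.
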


Bienstock and Dean~\cite{BiDe93} showed that for any integers $k,m$ with $m\ge k \ge 4$, there is a graph $G$ with $\ucr{G}=k$ and $\rcr{G}\ge m$. In~\cite{survey}, Schaefer wrote: ``Bienstock and Dean's graphs $G_m$ with $\ucr{G_m}=4$ and $\rcr{G_m}=m$ should give $\pcr{G_m}=\rcr{G_m}$, since the proof of $\rcr{G_m}\ge m$ seems to work with pseudolinear drawings.'' As we set to work out the details, we realized that the Bienstock and Dean proof does not carry over to the pseudolinear case in a totally straightforward way: an obstacle to extend a set of segments to an arrangement of pseudolines needs to be found. As it is often the case when settling a stronger result, our proof of the following statement turned out to be simpler than the proof in~\cite{BiDe93}. For this reason, and because this also implies the Bienstock and Dean result, it seems worth to include here the following statement and its proof.

\begin{thm}\label{thm:fourandmore}
For any integers $k, m$ with $m\ge k \ge 4$, there is a graph $G$ with $\ucr{G}=k$ and $\pcr{G} \ge m$.
\end{thm}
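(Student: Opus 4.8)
The plan is to reduce the general case to $k = 4$ and then to build an ``almost planar'' amplifier. For the reduction, suppose we have produced, for every integer $m \ge 4$, a graph $H_m$ with $\ucr{H_m} = 4$ and $\pcr{H_m} \ge m$. Given $k \ge 4$, let $G$ be the disjoint union of $H_m$ with $k - 4$ copies of $K_5$. Since the ordinary crossing number is additive over connected components, $\ucr{G} = 4 + (k-4)\ucr{K_5} = k$. The restriction of a pseudolinear drawing to a subgraph is again pseudolinear, because a subfamily of a pseudoline arrangement is a pseudoline arrangement; hence $\tpcr$ is superadditive over components, and since $\ucr{K_5} \le \pcr{K_5} \le \rcr{K_5} = 1$ gives $\pcr{K_5} = 1$, we get $\pcr{G} \ge \pcr{H_m} + (k-4) \ge m + (k-4) \ge m$. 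So everything reduces to constructing the graphs $H_m$.

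For $H_m$ I would follow the Bienstock--Dean template and build a graph that is almost planar: a $3$-connected planar core $P_m$, whose embedding and face structure are forced by Whitney's theorem, together with a bounded number of extra vertices and a bounded number of extra edges, one of which is a distinguished edge $e^\ast$ in the role of Bienstock and Dean's ``long'' edge. The core $P_m$ is assembled from $m$ copies of a small rigid gadget linked in a chain, and the extra edges are placed so that there is a curvilinear drawing in which $P_m$ appears in its planar embedding, the chain is routed monotonically alongside $e^\ast$ with none of its edges crossed, and the only crossings --- exactly four --- occur among the bounded family of extra edges; this gives $\ucr{H_m} \le 4$. For the matching bound $\ucr{H_m} \ge 4$ one arranges the extra edges so that $H_m$ minus any three edges is still non-planar, which together with the above drawing pins down $\ucr{H_m} = 4$; this is a finite check once the gadget is fixed.

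The heart of the argument is $\pcr{H_m} \ge m$, and here I would work in the disk model: a pseudolinear drawing $\dd$ of $H_m$ lives in a disk $C$ with antipodal points of $\partial C$ identified, and each edge $f$ extends to a pseudoline whose trace in $C$ is a chord $\ell_f$ of $C$; distinct edges carry distinct pseudolines, and any two chords $\ell_f, \ell_g$ cross exactly once inside $C$. Consequently, if a triangle $T$ of $\dd$ is drawn without crossings, then for every edge $f$ not on $T$ the chord $\ell_f$ meets the Jordan curve bounding $T$ an even number of times --- its endpoints lie on $\partial C$, hence outside $T$ --- and at most three times, one per side; so it meets $\partial T$ in $0$ or $2$ points. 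In other words, a pseudoline is convex with respect to a crossing-free ``pseudo-triangle.'' The plan is then to use the rigidity of $P_m$ to show that in \emph{any} pseudolinear drawing of $H_m$ the $m$ chained gadgets present themselves as $m$ crossing-free, pairwise edge-disjoint triangles arranged along $e^\ast$, so that the chord $\ell_{e^\ast}$ cannot avoid entering each of them --- although an unrestricted curve could --- whence, by the convexity property, each entry forces a crossing of the drawing on the edge $e^\ast$ itself, and the chaining makes these $m$ crossings distinct, contradicting any pseudolinear drawing with fewer than $m$ crossings.

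The main obstacle is exactly the gap the authors flag in a direct transcription of the Bienstock--Dean argument: a pseudoline can extend far past the edge it carries, so ``$\ell_{e^\ast}$ crosses $\ell_f$'' does not in itself create a crossing of the drawing, and one must engineer $P_m$ so that the relevant intersection points are trapped on the edges --- this is the purpose of the $3$-connectivity and the chain linkage, which force the faces through which $\ell_{e^\ast}$ must pass and leave it no room to slip past a gadget ``outside'' its edges without paying for it with other, more expensive, crossings. A secondary but essential point is that, since pseudoline arrangements are strictly more flexible than line arrangements (non-stretchable arrangements exist), the entire argument must be carried out using only properties valid for every pseudoline arrangement --- the single-crossing property of chords in $C$, the evenness of intersection with a Jordan curve whose endpoints lie outside it, and the one-crossing-per-side bound for a pseudo-triangle --- and never anything special to straight lines. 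Showing that the chain gadget genuinely forces this configuration under just these constraints, and that the configuration costs only four crossings in the curvilinear model, is the real work; the rest is bookkeeping.
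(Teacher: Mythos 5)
Your reduction to $k=4$ (disjoint union with $k-4$ copies of $K_5$, additivity of $\tucr$ over components, superadditivity of $\tpcr$) is sound, though the paper avoids it by building the dependence on $k$ directly into one construction. The problem is everything after that: the existence of the graphs $H_m$ with $\ucr{H_m}=4$ and $\pcr{H_m}\ge m$ is exactly the content of the theorem, and your proposal never actually produces them. No gadget is specified, so the claims $\ucr{H_m}\le 4$, ``$H_m$ minus any three edges is non-planar,'' and above all the claim that \emph{every} pseudolinear drawing presents $m$ crossing-free, edge-disjoint triangles ``arranged along $e^\ast$'' are assertions about an object that has not been defined; you yourself defer this as ``the real work.'' A proof outline that postpones precisely the step where the difficulty lives is a genuine gap, not a proof.

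Moreover, the one concrete mechanism you do propose does not close that gap. Your pseudo-convexity observation (a chord meets the boundary of a clean triangle in $0$ or $2$ points) is correct, but the inference ``$\ell_{e^\ast}$ enters the triangle, hence the \emph{drawing} gains a crossing on $e^\ast$'' is a non sequitur: the entry point may lie on the extension of $e^\ast$ beyond its endpoints, and intersections involving extensions are free --- indeed mandatory, since every pair of pseudolines crosses once. This is exactly the obstacle the authors flag, and you acknowledge it, but ``engineer $P_m$ so that the intersection points are trapped on the edges'' is not an argument; note also that if the $m$ triangles genuinely separated the endpoints of $e^\ast$ in every drawing, then even the drawn curve $e^\ast$ would have to cross all of them, contradicting $\ucr{H_m}=4$, so the forcing you need (the curve escapes cheaply but its pseudoline extension cannot) is delicate and is never established. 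The paper resolves this with a different, fully concrete obstruction: a \emph{clam} formed by two disjoint $2$-paths, whose pseudolinear extension would require the four pseudolines involved to have at least $8$ pairwise intersections, exceeding $\binom{4}{2}=6$; a small graph with heavy edges is then rigid enough that every drawing in the plane avoiding heavy-edge crossings contains a clam, and weighting (parallel $2$-paths) amplifies this to $\pcr{G'}\ge m$ while keeping $\ucr{G'}=k$. Your proposal would need an obstruction lemma of comparable strength, and it does not contain one.
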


As Schaefer observes, this also separates the monotone crossing number mon-cr from the pseudolinear crossing number, since for any graph $G$ we have mon-cr$(G) \le \binom{2\text{\rm cr}(G)}{2}$~\cite{pachtoth}.

Although pseudoline arrangements are defined in $\mathbb{P}^2$, we can alternatively think of them as lying in the Euclidean plane $\mathbb{R}^2$: starting with the $\mathbb{P}^2$ representation, we delete the disk boundary and extend infinitely (to rays) the segments that used to intersect the disk boundary. An arrangement of pseudolines may then be naturally regarded as a cell complex covering the plane. Two arrangements are {\em isomorphic} if there is a one-to-one adjacency-preserving correspondence between the objects in their associated cell complexes. Ringel~\cite{Ri55} was the first to exhibit a pseudoline arrangement (in $\real^2$) that is {\em non-stretchable}, that is, not isomorphic to any arrangement in which every pseudoline is a straight line. 

Schaefer wrote in~\cite{survey}: ``It should be possible to take a non-stretchable pseudoline arrangement $A$ and use Bienstock's machinery~\cite{Bi91} to build a graph $G_A$ for which $\pcr{G_A} < \rcr{G_A}$.'' Using Schaefer's roadmap, we have constructed a family of graphs to prove the following.

\begin{thm}\label{thm:pseu-rec}
For each integer $m \ge 1$ there exists a graph $G$ such that $\pcr{G} = 36(1+4m)$ and $\rcr{G} \ge 36(1+4m)+m$.
\end{thm}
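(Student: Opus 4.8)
The plan is to follow Schaefer's roadmap and adapt Bienstock's machinery~\cite{Bi91} to convert a non-stretchable pseudoline arrangement into a graph whose rectilinear crossing number strictly exceeds its pseudolinear crossing number. First I would fix a concrete non-stretchable arrangement, most naturally Ringel's~\cite{Ri55} (or the simpler non-Pappus arrangement), of some fixed size, say $n$ pseudolines; this accounts for the constant $36$ in the statement, which should be $\binom{n}{2}$ (or a closely related quantity counting forced crossings among the "backbone" edges encoding the arrangement). The idea of Bienstock's reduction is to build, for each line of the arrangement, a graph gadget consisting of a long path (or a "thick" bundle of parallel paths) that is forced to be drawn essentially straight, together with vertices placed so that their mutual positions encode the combinatorial incidence/orientation data of the arrangement. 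In a pseudolinear drawing these gadget-paths can be realized as (pieces of) pseudolines, so one obtains a pseudolinear drawing attaining exactly the "forced" number of crossings; in a rectilinear drawing the same gadget-paths would have to be straight, which is impossible because the arrangement is non-stretchable, so at least one extra crossing is forced.

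The key steps, in order, would be: (1) describe the gadget graph $G$ associated to the chosen arrangement $A$, using heavy edges (replacing single edges by $m$ parallel paths, or by a planar grid-like gadget of the appropriate density) so that in any optimal drawing these heavy edges are pairwise "almost disjoint" and each behaves like a pseudoline — this is exactly where the parameter $m$ and the term $+m$ in the lower bound enter; (2) exhibit an explicit pseudolinear drawing of $G$ by placing the heavy edges along the pseudolines of $A$ (lightly perturbed so each heavy edge lies in a thin neighborhood of its pseudoline), count its crossings, and check the count equals $36(1+4m)$ — this gives $\pcr{G} \le 36(1+4m)$; (3) prove the matching lower bound $\pcr{G} \ge 36(1+4m)$ by a counting argument showing that the heavy-edge structure forces that many crossings in \emph{any} pseudolinear drawing (this is the part where one shows the pseudolinear drawing above is optimal, typically via a deletion/averaging argument on the parallel copies, or by invoking that the "topological skeleton" of any pseudolinear drawing of $G$ must realize $A$ up to isomorphism); (4) prove $\rcr{G} \ge 36(1+4m)+m$ by arguing that a rectilinear drawing with only $36(1+4m)$ crossings would force the heavy edges to be straight segments arranged isomorphically to $A$, i.e. would stretch $A$ — contradicting non-stretchability — so at least one of the $m$ parallel copies in some bundle must pick up an extra crossing, in fact at least $m$ extra crossings because a single "defect" propagates through the whole bundle.

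The main obstacle I expect is step (4): carefully establishing that a \emph{near-optimal} rectilinear drawing (not just an optimal one) of the gadget graph must induce a straight-line realization of the arrangement $A$. One has to rule out the possibility that the rectilinear drawing "cheats" by routing a heavy edge wildly, trading local crossings inside a bundle for savings elsewhere; this requires the counting in step (3) to be tight enough that no such trade is profitable, which is precisely why the edges must be made heavy (multiplicity $m$) and why the surplus comes out as $+m$ rather than $+1$. Quantitatively, one wants: any rectilinear drawing that is within $m-1$ crossings of the pseudolinear optimum already forces the straightening, so the non-stretchability of $A$ pushes $\rcr{G}$ up by the full $m$. The secondary technical burden is the bookkeeping in steps (2)–(3): choosing the arrangement's size and the gadget so the crossing count comes out to the clean value $36(1+4m)$, and verifying that crossings between distinct heavy bundles, crossings within a bundle, and crossings involving the auxiliary "incidence-encoding" vertices all add up correctly. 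Finally I would remark, as Schaefer anticipated, that this construction also re-proves a qualitative separation $\pcr{G} < \rcr{G}$ and, combined with Theorem~\ref{thm:fourandmore}, places the pseudolinear crossing number strictly between the crossing number and the rectilinear crossing number.
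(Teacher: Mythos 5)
Your high-level roadmap coincides with the paper's (both follow Schaefer's suggestion of adapting Bienstock's machinery to a non-stretchable arrangement, and you correctly guess Ringel's $9$ pseudolines with $\binom{9}{2}=36$), but the mechanism you propose for the decisive step (4) has a genuine gap. You want the gadget encoding each pseudoline to be a heavy bundle of parallel paths that is ``forced to be drawn essentially straight,'' and then to argue that a cheap rectilinear drawing forces these heavy objects to be ``straight segments arranged isomorphically to $A$.'' A path with interior vertices (let alone a bundle of $m$ parallel $2$-paths) is never a straight segment in a rectilinear drawing: each constituent edge is straight, but the path may bend at every internal vertex, so it can realize any pseudoline. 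Hence non-stretchability of $A$ yields no contradiction for ``essentially straight'' bundles, and your step (4) collapses. Moreover, if the pseudolines themselves were encoded by multiplicity-$m$ bundles, each pair of crossing bundles would contribute on the order of $m^2$ crossings, so the claimed value $36(1+4m)$ could not arise from your bookkeeping in steps (2)--(3) either.

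The paper resolves exactly this difficulty with a three-layer gadget plus a heavy frame, and this is the idea missing from your plan. Each pseudoline $\ell_i$ is represented by a \emph{single} red edge $e_i$ of weight $1$ (a single edge \emph{is} a straight segment in a rectilinear drawing) sandwiched between two blue guide paths $P_i,Q_i$ of weight $m$; all endpoints are attached to a cycle $C$ and two apex vertices $a,b$ whose black edges have weight $\binom{s}{2}(1+4m)+2m$, so they can never be crossed. The frame confines all $e_i,P_i,Q_i$ to the disk bounded by $C$ with a prescribed cyclic order of endpoints, and the resulting alternation along $C$ forces, for every $i\neq j$, the crossings $e_i\times e_j$, $e_i\times P_j$, $e_i\times Q_j$; this gives the exact lower bound $\binom{s}{2}(1+4m)$ for \emph{every} drawing (Proposition~\ref{pro:claima}) -- in particular you do not need, and should not invoke, the claim that the ``topological skeleton'' of any pseudolinear drawing realizes $A$, which is neither true nor necessary, and this exact bound is also what rules out the ``cheating'' trades you worry about in near-optimal drawings. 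For the rectilinear bound (Proposition~\ref{pro:claimb}): if no $e_i$ crossed $P_i\cup Q_i$, each straight segment $e_i$ would be trapped in the strip bounded by $P_i\cup Q_i$, so $e_1,\dots,e_s$ would form a straight-line arrangement isomorphic to $A$, contradicting non-stretchability; hence some $e_i$ crosses a weight-$m$ blue path, which is where the $+m$ comes from (not from a ``defect propagating through a bundle''). Finally, the weighted graph is converted into an ordinary simple graph by replacing a weight-$w$ edge with $w$ internally disjoint $2$-paths (Proposition~\ref{pro:canbe}), which is the legitimate version of your ``parallel copies'' device.
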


Yet another reason that makes worth to include in its full detail the construction proving this last result, is that we use it to prove the following.

\begin{thm}\label{thm:same}
The decision problem ``Is $\pcr{G}=\rcr{G}$''? is $\exists\real$-complete.
\end{thm}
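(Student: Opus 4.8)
The plan is to prove the two halves of $\exists\real$-completeness separately, with essentially all of the effort going into hardness. For membership, note that every rectilinear drawing is pseudolinear, so $\pcr{G}\le\rcr{G}$ always holds and ``$\pcr{G}=\rcr{G}$'' is equivalent to ``$\rcr{G}\le\pcr{G}$'', that is, to the existence of a rectilinear drawing of $G$ with at most $\pcr{G}$ crossings. I would encode this as a sentence of the existential theory of the reals: existentially quantify the $2|V(G)|$ coordinates of a straight-line drawing $R$, express $\operatorname{cr}(R)$ as a Boolean combination of sign conditions on the coordinates, and require it to be at most the pseudolinear optimum --- a fixed integer in $\{0,\dots,\binom{|E(G)|}{2}\}$, handled using that pseudoline arrangements on $|E(G)|$ elements fall into finitely many combinatorial types (rank-$3$ oriented matroids), each encodable in polynomial space, so that the combinatorial data of a crossing-minimal pseudolinear drawing of $G$ can be carried in the quantifier prefix. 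This is the routine direction; the content of the theorem is hardness.

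For hardness I would reduce from {\sc Stretchability}, which is $\exists\real$-complete by the Mn\"ev universality theorem and is, indeed, the source problem for Bienstock's reduction to {\sc RectilinearCrossingNumber}. Given a pseudoline arrangement $A$, the idea is to run the construction underlying Theorem~\ref{thm:pseu-rec} with $A$ in the role that a fixed non-stretchable arrangement plays there, producing in polynomial time a graph $G_A$ and an explicit integer $k_A$ --- a polynomial-time computable function of the combinatorics of $A$ (how many pseudolines it has, which triples of them are concurrent, and the cyclic order of the crossings along each pseudoline) --- with two properties. First, $\pcr{G_A}=k_A$, and this value does not depend on whether $A$ is stretchable: the upper bound comes from the pseudolinear drawing obtained by laying each gadget path of $G_A$ along the corresponding pseudoline of $A$ (legitimate exactly because $A$ is a pseudoline arrangement), and the matching lower bound comes from a Bienstock--Dean-style counting argument over the gadgets, which in fact already bounds $\ucr{G_A}$ from below by $k_A$. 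Second, $\rcr{G_A}=k_A$ when $A$ is stretchable --- draw $G_A$ on a straight-line realization of $A$ --- whereas $\rcr{G_A}\ge k_A+1$ when $A$ is not stretchable, since a rectilinear drawing of $G_A$ with only $k_A$ crossings would be forced, by the same counting argument, into the combinatorial pattern of $A$, and the straight edges carrying the gadget paths would then realize $A$ with genuine lines. Consequently $\pcr{G_A}=\rcr{G_A}$ if and only if $A$ is stretchable, so $A\mapsto G_A$ is the required polynomial-time many-one reduction and $\exists\real$-hardness follows.

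The main obstacle is establishing the first property, and especially that $\pcr{G_A}$ equals $k_A$ \emph{exactly} --- whence the pseudolinear value is insensitive to stretchability while the rectilinear one is not. On the lower-bound side this means the gadgets must be designed so that every drawing of $G_A$ with $k_A$ crossings is pushed, crossing by crossing, into the pattern of $A$; on the upper-bound side it means verifying that ``draw $G_A$ on $A$'' really is a pseudolinear drawing, i.e.\ that all edges of $G_A$ --- the connecting edges of the gadgets as well as the long paths --- can be simultaneously extended to one pseudoline arrangement. This is the delicate point the authors flag in connection with Theorem~\ref{thm:fourandmore}: passing from ``extends to a line arrangement'' to ``extends to a pseudoline arrangement'' is not automatic and must be built into the gadgetry of the Theorem~\ref{thm:pseu-rec} construction. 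Once the value $k_A$ is nailed down and shown computable from $A$ in polynomial time, the reduction and the theorem follow.
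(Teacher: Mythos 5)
Your hardness half is essentially the paper's own proof of Theorem~\ref{thm:same}: reduce from {\sc Stretchability} (which is $\exists\real$-complete by Shor and Mn\"ev), feed the \emph{input} arrangement $\aa$ into the Section~\ref{sub:cons} construction with weight parameter $m=1$, and use the facts that $\pcr{\ga,w_1}=5\binom{s}{2}$ independently of stretchability (Proposition~\ref{pro:claima}) while $\rcr{\ga,w_1}$ equals this value if $\aa$ is stretchable and exceeds it otherwise (Propositions~\ref{pro:claima} and~\ref{pro:claimb}); the ``delicate point'' you flag, that the drawing along the wiring diagram really extends to a pseudoline arrangement after the weighted edges are expanded into bundles of $2$-paths, is exactly what Proposition~\ref{pro:canbe}(b) handles. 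One small discrepancy: in the paper the value $k_\aa$ depends only on $s=|\aa|$, not on concurrencies or the order of crossings along the pseudolines, but this does not affect the reduction.

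The genuine gap is in your membership argument. Since $\pcr{G}\le\rcr{G}$ always, the question is whether there is a rectilinear drawing $R$ with $\ucr{R}\le\pcr{G}$, and $\pcr{G}$ is a \emph{minimum} over exponentially many combinatorial types. Carrying ``the combinatorial data of a crossing-minimal pseudolinear drawing'' in the existential quantifier prefix certifies only an upper bound $\pcr{G}\le k$ for the guessed drawing's crossing number $k$; nothing in a purely existential sentence certifies $\pcr{G}\ge k$, so the formula you describe is also satisfiable for graphs with $\pcr{G}<\rcr{G}$ (guess a suboptimal pseudolinear drawing with $k\ge\rcr{G}$ crossings and any rectilinear drawing beating it). The minimality condition is a universal statement over all pseudolinear drawings, i.e.\ coNP-flavored, and it is not known that such conditions can be absorbed into $\exists\real$ (NP $\subseteq\exists\real$ is known; coNP $\subseteq\exists\real$ is not). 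Note that the paper's proof of Theorem~\ref{thm:same} establishes only the reduction, i.e.\ $\exists\real$-hardness, and offers no membership argument at all; so to claim completeness as stated you would need a correct membership proof, which your sketch does not supply, or else you should weaken your claim to $\exists\real$-hardness.
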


Theorems~\ref{thm:np-complete} and \ref{thm:fourandmore} are proved in Sections~\ref{sec:np-complete} and \ref{sec:fourandmore}, respectively. Theorems~\ref{thm:pseu-rec} and \ref{thm:same} are proved in Section~\ref{sec:pseu-rec}. Section~\ref{sec:cr} contains some concluding remarks and open questions.

\subsection{Observations and terminology for the rest of the paper}

Unless otherwise stated, a drawing is understood to be a drawing in $\real^2$.  All drawings of a graph $G$ under consideration either minimize $\ucr{G}$, or are pseudolinear or rectilinear drawings of $G$. All such drawings are {\em good}, that is, no two edges cross each other more than once, no adjacent edges cross each other, and no edge crosses itself. Thus we implicitly assume that all drawings under consideration are good. A drawing $\dd$ (in any surface $\Sigma$) may be regarded as a one-dimensional subset of $\Sigma$. Taking this viewpoint, a {\em region} of $\dd$ is a connected component of $\Sigma\setminus \dd$. Thus, in the particular case in which $\dd$ is an embedding, the regions of $\dd$ are simply the faces. Finally, two drawings $\dd$ and $\dd'$ of the same graph in a surface $\Sigma$ are {\em isomorphic} if there is a self-homeomorphism of $\Sigma$ that takes $\dd$ to $\dd'$.

% ***********************************************************************

\section{Complexity of {\sc PseudolinearCrossingNumber}: \\ proof of Theorem~\ref{thm:np-complete}}\label{sec:np-complete}

We prove NP-hardness in Lemma~\ref{lem:pcrisNPhard} and membership in NP in Lemma~\ref{lem:pcrisinNP}.

The fact that {\sc PseudolinearCrossingNumber} is NP-hard is not difficult to prove, and although we could not find any reference in the literature, perhaps it could be considered a folklore result. It seems worth to include this proof, for completeness.% Moreover, the same argument takes care of the NP-hardness of other variants of crossing number, and establishes it for any family of graphs for which {\sc CrossingNumber} is known to be NP-hard (such as cubic graphs~\cites{Hl06,pelsmajer} or near-planar graphs~\cite{CaMo13}).

\begin{lem}\label{lem:pcrisNPhard}
{\sc PseudolinearCrossingNumber}, {\sc RectilinearCrossingNumber}, and {\sc Mo\-notoneCrossingNumber} are NP-hard.
\end{lem}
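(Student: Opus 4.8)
The plan is to give a single polynomial-time reduction that simultaneously establishes NP-hardness for all three variants, by reducing from \textsc{CrossingNumber}, which is NP-complete~\cite{GaJo83}. Given an instance $(G,k)$ of \textsc{CrossingNumber}, I would build a graph $G'$ and an integer $k'$ such that $\ucr{G'}$, $\pcr{G'}$, $\trcr(G')$, and $\moncr{G'}$ all coincide and equal a quantity that is $\le k'$ if and only if $\ucr{G}\le k$. The standard device for forcing these numbers to agree is edge multiplication: replace each edge of $G$ by a bundle of $t$ parallel edges (subdivided once so the graph remains simple), for a suitably large but polynomially-bounded $t$ depending on $|E(G)|$ and $k$. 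In an optimal drawing of the blown-up graph one may assume the edges of each bundle run in a thin tube and cross essentially as a ``book'' of parallel curves; a crossing between two original edges becomes $t^2$ crossings, while any two edges of the same bundle can be drawn without crossing. A careful accounting shows $\ucr{G'}=t^2\,\ucr{G}$ once $t$ exceeds the total number of edges of $G$, since spending even one ``wasted'' crossing inside a bundle would be dominated by the $t^2$ savings available elsewhere.

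The key point is that this blow-up construction is \emph{self-improving} across the drawing hierarchy $\ucr{}\le\pcr{}\le\trcr{}$ (and also sits below $\moncr{}$). Concretely, I would argue: (i) $\ucr{G'}=t^2\ucr{G}$ by the bundling argument above; (ii) from an optimal drawing of $G$ one can first pass to a \emph{rectilinear} drawing of $G$ — crossings may increase, but — and here is the trick — one instead draws $G'$ by taking \emph{any} drawing of $G$ with $\ucr{G}$ crossings, then realizing each bundle as $t$ straight segments in a narrow parallelogram around each edge, \emph{rerouting near the crossings} so that the picture is pseudolinear/rectilinear with exactly $t^2\ucr{G}$ crossings. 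The cleanest way to do this is to start from an optimal drawing of $G$ and planarize each crossing, obtaining a planar graph; the bundles then lie in the faces of a planar (hence rectilinear, by Fáry) drawing, and each bundle can be realized by parallel segments. This yields $\trcr(G')\le t^2\ucr{G}$, and combined with $\ucr{G'}\ge t^2\ucr{G}$ and the monotonicity $\ucr{G'}\le\pcr{G'}\le\trcr(G')$ we get equality throughout; the same upper-bound drawing is monotone after a rotation, handling $\moncr{G'}$. Hence setting $k'=t^2k$ completes the reduction for all three problems at once.

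The main obstacle — and the step deserving the most care — is item (i), the lower bound $\ucr{G'}\ge t^2\ucr{G}$, i.e.\ ruling out that the blow-up can be drawn with fewer than $t^2\ucr{G}$ crossings by cleverly interleaving edges of different bundles or paying a sub-$t^2$ price per original crossing. The standard argument fixes an optimal drawing $\dd'$ of $G'$, for each bundle picks a representative edge achieving the minimum number of crossings with the other bundles' representatives, and shows that the induced drawing of $G$ has at most $\ucr{G'}/t^2 + (\text{lower-order terms})$ crossings; choosing $t$ large enough (polynomial in the input size, since $\ucr{G}\le\binom{|E(G)|}{2}$) absorbs the lower-order terms and forces $\ucr{G'}/t^2\ge\ucr{G}$. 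I would also need the routine but not entirely trivial bookkeeping that subdividing edges does not change any of the four crossing numbers, and that the whole construction has size polynomial in $|G|+\log k$. Membership in NP for \textsc{PseudolinearCrossingNumber} is deferred to Lemma~\ref{lem:pcrisinNP}, and the $\exists\real$-hardness discussion of the rectilinear version is separate; here the content is purely the hardness reduction sketched above.
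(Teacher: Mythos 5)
There is a genuine gap, and it sits exactly where you wave your hands: step (ii), the upper bound $\rcr{G'}\le t^2\,\ucr{G}$. Your $G'$ replaces each edge by $t$ parallel $2$-paths, so in a rectilinear drawing every bundle member is a polygonal path with \emph{one} bend (its single subdivision vertex). But the route you propose for it comes from planarizing an optimal drawing of $G$ and applying F\'ary's theorem: there, an edge $e$ of $G$ that carries $c$ crossings becomes a polygonal path with $c$ bend points, and a $2$-path simply cannot follow such a route once $c\ge 2$ (nor can ``parallel segments in a narrow parallelogram'' trace a curved or multiply-bent edge). So the construction does not produce a rectilinear drawing of $G'$ with $t^2\,\ucr{G}$ crossings, and without (ii) the chain of equalities collapses. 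What your $G'$ actually controls is a $1$-bend version of the crossing number scaled by $t^2$, and the whole point of Bienstock--Dean-type separations (which this paper extends) is that bounded-bend or straight-line crossing numbers need not coincide with $\ucr{}$; at the very least the equality you assert is unproved. Incidentally, your emphasis is inverted: the lower bound (i) is the routine part (pick a random representative from each bundle; the expected number of inter-bundle crossings hit is at least $\ucr{G}$, giving $\ucr{G'}\ge t^2\ucr{G}$), whereas the rectilinear upper bound is where the real work lies.

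The fix is to give each subdivided edge enough bend points to follow its F\'ary route --- about two subdivision vertices per crossing, i.e.\ at most $2|E(G)|$ per edge --- and once you do that the bundles and the $t^2$ blow-up become superfluous. This is the paper's proof: subdivide each edge $e$ of $G$ a total of $2\cdot(\hbox{number of crossings on }e)$ times to get $G'$; subdivision does not change the crossing number, and F\'ary applied to the planarization of an optimal drawing of $G$ yields a straight-line drawing of $G'$ with $\ucr{G}$ crossings, so $\rcr{G'}=\ucr{G'}=\ucr{G}$. Then $\ucr{G'}\le \moncr{G'}\le\pcr{G'}\le\rcr{G'}$ forces all four to equal $\ucr{G}$, and mapping $(G,k)\mapsto(G',k)$ is the desired polynomial reduction from {\sc CrossingNumber} for all three problems at once.
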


\begin{proof}
We claim that for any graph $G$  there is a graph $G'$ obtained by subdividing each edge of $G$ at most $2|E(G)|$ times, and such that $\rcr{G'}=\ucr{G}$. We note that the {\sc Rectilinear\-CrossingNumber} part of the lemma follows at once from this claim. The other statements also follow, since $\ucr{G} \le \moncr{G} \le \pcr{G} \le \rcr{G}$ hold for any graph $G$.

We now prove the claim. Let $\dd$ be a crossing-minimal drawing of $G$. A {\em segment} of $\dd$ is an arc of $\dd$ whose endpoints are either two vertices, or one vertex and one crossing, or two crossings, and is minimal with respect to this property. (Put differently, if we planarize $\dd$ by converting crossings into degree $4$ vertices, the segments correspond to the edges of this plane graph). By F\'ary's theorem~\cite{fary}, every planar graph has a plane rectilinear drawing. Therefore there is a drawing $\dd'$ of $G$, with the same number of crossings as $\dd$, in which every segment is straight. Now for each edge $e$ of $G$, let $\times(e)$ denote the number of crossings of $e$. It is easy to see that if we subdivide each edge $e$ a total of $2\cdot\times(e)$ times, then the resulting graph $G'$ has a rectilinear drawing with $\ucr{G}$ crossings: indeed, it suffices to place two pairs of new (subdivision) vertices in a small neighborhood of each crossing of $\dd'$, one pair on each of the crossing edges, and join each pair with a straight segment.
\end{proof}

\def\nice{{pseudolinear model}}

We now settle membership in NP. A {\em {\nice}} graph is a plane graph $H$ with two disjoint distinguished subsets of vertices $T=\{t_1,t_2,\ldots,t_{2m}\}$ (where each {\em terminal} $t_i$ has degree $1$) and $V$, such that the following hold:

\begin{enumerate}
\item The boundary walk (say, in clockwise order) along the infinite face has the vertices $t_1, t_2, \ldots, t_{2m}$ (but not necessarily only these vertices) in this cyclic order.
\item There is a collection of paths $\pp=\{P_1, P_2, \ldots,P_m\}$ in $H$ with the following properties:
\begin{enumerate}
\item $H=P_1\cup P_2 \cup \cdots \cup P_m$. 
\item The ends of $P_i$ are $t_i$ and $t_{i+m}$, for $i=1,2,\ldots,m$.
\item Each $P_i$ contains exactly two vertices in $V$.
\item Any two paths in $\pp$ intersect each other in exactly one vertex, and if they intersect in a vertex not in $V$, then this vertex has degree $4$.
\end{enumerate}

\end{enumerate}

For each $i=1,2,\ldots,m$, let $u_i, v_i$ be the (only two) vertices in $V$ contained in $P_i$. Then the interior vertices of the subpath $u_i P_i v_i$ (if any) are {\em special} vertices of $H$. This {\nice} $H$ {\em induces} a graph $G$ with vertex set $V$, where $u,v\in V$ are adjacent in $G$ if and only if there is a path in $\pp$ that contains $u$ and $v$. 

\begin{lem}\label{lem:pcrisinNP}
{\sc PseudolinearCrossingNumber} is in NP.
\end{lem}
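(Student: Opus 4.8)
The plan is to give a polynomial-time verifiable certificate that $\pcr{G}\le k$. The natural candidate certificate is a pseudolinear drawing $\dd$ of $G$ with at most $k$ crossings, but a raw description of such a drawing (coordinates, or even a combinatorial description of the curves) is not obviously of polynomial size, and — more to the point — certifying that the edges of $\dd$ can be simultaneously extended to a pseudoline arrangement is exactly the kind of realizability question that tends to be hard. The key idea, encoded in the definition of a \nice{} above, is to replace ``pseudolinear drawing'' by a purely combinatorial object: a plane graph $H$ (the \nice) together with its path system $\pp$. First I would prove the combinatorial heart of the matter, namely that $\pcr{G}\le k$ if and only if $G$ is induced by some \nice{} $H$ in which the total number of degree-$4$ intersection vertices (the crossings) is at most $k$. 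For the forward direction, take a pseudolinear drawing of $G$ realizing $\pcr{G}$, together with its underlying pseudoline arrangement; each edge $e=uv$ lies on a pseudoline, and I extend $e$ along its pseudoline in both directions until just past the last crossing of that pseudoline with the others, placing terminal vertices $t_i,t_{i+m}$ at the ends. Planarizing this picture (turning crossings into degree-$4$ vertices) gives the plane graph $H$; the $m$ extended arcs become the paths $P_i$; condition (2d) — that any two paths meet exactly once, and in a degree-$4$ vertex if not at a shared vertex in $V$ — is exactly the defining property of a pseudoline arrangement (pairwise single crossings). Conditions (1) and (2b) record the cyclic order in which the pseudolines exit to infinity, which for an arrangement of pseudolines in $\mathbb{P}^2$ is forced. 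The special vertices are harmless subdivisions.

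The reverse direction is where the real content lies, and I expect it to be the main obstacle. Given a \nice{} $H$ with path system $\pp$ and at most $k$ degree-$4$ vertices, I must produce an actual pseudolinear drawing of the induced graph $G$ with at most $k$ crossings. The idea is that the paths $P_1,\dots,P_m$, viewed as curves in the plane via the plane embedding of $H$, \emph{already} form something very close to a pseudoline arrangement: they pairwise cross exactly once (by 2d), and by the boundary condition (1) their ends appear in the ``antipodal'' cyclic order $t_1,\dots,t_m,t_{1},\dots$ around the outer face. Here I would invoke the classical fact (Levi's enlargement lemma / the standard characterization of pseudoline arrangements via wiring diagrams and allowable sequences) that a family of simple curves in $\mathbb{R}^2$, each going to infinity in both directions, that pairwise cross exactly once and exit in antipodal order, can be completed to a genuine pseudoline arrangement in $\mathbb{P}^2$ — equivalently, each $P_i$ can be redrawn (keeping its combinatorial crossing pattern) as a pseudoline. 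Restricting this arrangement to the subpaths $u_iP_iv_i$ and contracting the special vertices gives a pseudolinear drawing of $G$; its crossings are a subset of the degree-$4$ vertices of $H$, so there are at most $k$ of them. The subtlety to be careful about is that condition (2d) permits two paths to share a vertex in $V$ (this is the case when the corresponding edges of $G$ are adjacent, so their supporting pseudolines cross at a graph vertex rather than at an edge crossing), and one must check this is consistent with a pseudoline arrangement — it is, since in a pseudolinear drawing adjacent edges meet at their common vertex and nowhere else, which is precisely a single crossing of their pseudolines located at that vertex.

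Finally I would bound the size of the certificate. A crossing-minimal pseudolinear drawing of $G$ may a priori use many crossings, but only if $k$ is large; since $k$ is part of the input and we may assume $k=O(|E(G)|^2)$ (otherwise the instance is trivially a YES instance, as even a generic straight-line drawing has $O(|E|^2)$ crossings, giving $\pcr{G}\le\rcr{G}=O(|E|^2)$), the number of crossings, hence the number of degree-$4$ vertices of $H$, is polynomial in the input. Each path $P_i$ then has polynomially many vertices, the total size of $H$ together with the list of paths $\pp$ and the distinguished sets $T$, $V$ is polynomial, and checking conditions (1)–(2d) — planarity of the given embedding, the cyclic order on the outer face, that each $P_i$ is a path from $t_i$ to $t_{i+m}$ with exactly two vertices in $V$, that paths pairwise meet once, and the degree-$4$ condition — is all doable in polynomial time. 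Counting the degree-$4$ vertices and comparing to $k$ finishes the verification. Hence {\sc PseudolinearCrossingNumber} is in NP, and together with Lemma~\ref{lem:pcrisNPhard} this proves Theorem~\ref{thm:np-complete}.
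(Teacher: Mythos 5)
Your overall strategy is the paper's: certify $\pcr{G}\le k$ by the planarization of a pseudolinear drawing together with its pseudoline extensions, i.e.\ a {\nice}, and have the verifier check the purely combinatorial conditions (1)--(2d). However, there is a genuine error in what you count. You state the equivalence as ``$\pcr{G}\le k$ if and only if $G$ is induced by a model $H$ whose \emph{total} number of degree-$4$ intersection vertices is at most $k$,'' and your verifier counts all degree-$4$ vertices. That is the wrong invariant: by condition (2d) every pair of paths of $\pp$ meets exactly once, so the number of degree-$4$ intersection vertices of \emph{any} model equals $\binom{m}{2}$ minus the number of pairs of paths meeting at a vertex of $V$ --- a quantity forced by $G$ alone and unrelated to how good the drawing is. Concretely, for the path on four vertices $\pcr{G}=0$, yet in every model the two paths carrying the two non-adjacent edges must cross at a degree-$4$ vertex not in $V$, so no model with $0$ such vertices exists and the certificate for the YES instance $k=0$ does not exist; the forward direction of your equivalence is false, and NP membership does not follow as written. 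The quantity that works --- and the one the paper uses --- is the number of \emph{special} vertices, i.e.\ the interior vertices of the subpaths $u_iP_iv_i$: these record crossings on actual edges of $G$, whereas crossings between the extension parts of the pseudolines are non-special degree-$4$ vertices and must not be charged against $k$. You have in fact inverted this notion: special vertices are not ``harmless subdivisions,'' they are the crossings, and in the reverse direction they are not contracted but turned back into crossings (it is the non-special, non-$V$ vertices that get discarded). With the count corrected to special vertices, your argument becomes the paper's.

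Two smaller points. In the reverse direction no completion result (Levi's enlargement lemma, wiring diagrams) is needed: after a self-homeomorphism of the disk placing $t_i$ and $t_{i+m}$ antipodally --- possible by condition (1) --- the full paths $P_i$ close up under antipodal identification into simple closed curves, each meeting the boundary circle once and pairwise intersecting exactly once (condition (2d)); that is already a pseudoline arrangement by definition, with each edge of $G$ lying on one pseudoline. Your size bound (assuming $k=O(|E|^2)$, since otherwise the instance is trivially YES) is fine and in fact slightly more explicit than the paper's remark that $|H|$ is polynomially bounded in $|G|$.
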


\begin{proof}
The key claim is that a graph $G=(V,E)$ has a pseudolinear drawing with exactly $k$ crossings if and only if $G$ is induced by a {\nice} with exactly $k$ special vertices.

For the ``only if'' part, suppose that $G$ has a pseudolinear drawing with $k$ crossings. Extend the edges of $G$ so that the resulting pseudolines form an arrangement; this can clearly be done so that no more than two pseudolines intersect at a given point, unless this point is in $V$. By transforming the edge crossings to (degree $4$, special) vertices, and transforming into vertices the intersections of the pseudolines with the disk boundary, the result is a {\nice} plane graph $H$ with exactly $k$ special vertices. For the ``if'' part, suppose that $G$ is induced by a {\nice} graph with $k$ special vertices. Consider then the drawing of $G$ obtained by removing all vertices that are neither in $V$ nor special, and then  transforming  each special vertex into a crossing. The result is a pseudolinear drawing of $G$ with $k$ crossings.

Thus the existence of a {\nice} graph $H$ with $k$ special vertices that induces $G$ provides a certificate that the pseudolinear crossing number of $G$ is at most $k$. Since the size of $H$ is clearly polinomially bounded on the size of $G$, the lemma follows.
\end{proof}

% ***********************************************************************************

\section{Separating $\tpcr$ from $\tucr$: proof of Theorem~\ref{thm:fourandmore}}\label{sec:fourandmore}

We start by finding a substructure that guarantees that a drawing is not pseudolinear. 
A {\em clam} is a drawing of two disjoint $2$-paths $P$ and $Q$, with exactly two faces in which the infinite face is incident with the internal vertices of $P$ and $Q$, and with no other vertices.
It is easy to see that, up to isomorphism, a clam drawing looks as the one depicted in Figure~\ref{fig:forb}.  

\begin{center}
\begin{figure}[ht!]
\scalebox{0.5}{\input{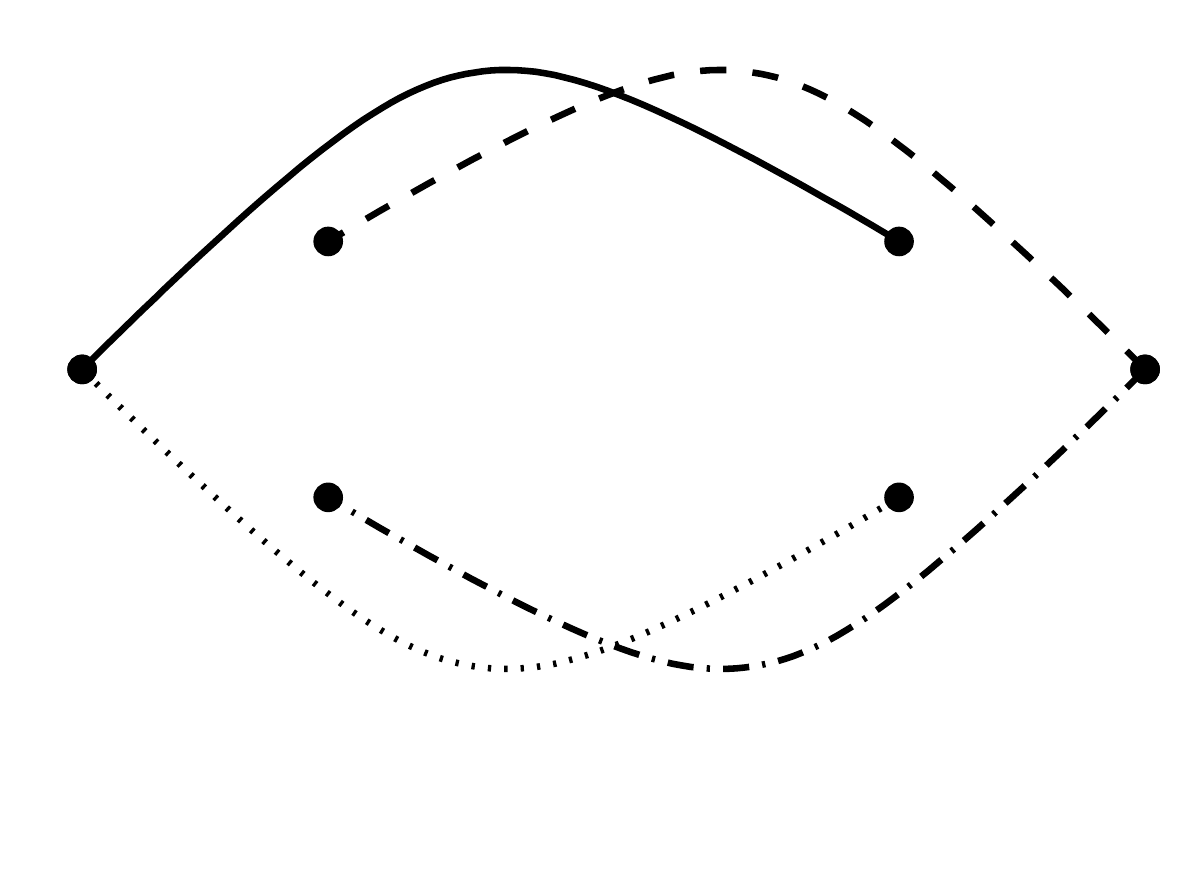_t}}
\caption{A clam.}
\label{fig:forb}
\end{figure}
\end{center}

\begin{prop}[An obstacle to pseudolinearity]\label{pro:clam}
Let $P,Q$ be disjoint $2$-paths of a graph $G$. If $\dd$ is a drawing of $G$ whose restriction to $P\cup Q$ is a clam, then $\dd$  is not pseudolinear.
\end{prop}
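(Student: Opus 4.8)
The plan is to assume that $\dd$ is pseudolinear and derive a contradiction from the trapped‑endpoint structure of a clam. First I would pin down the combinatorial type of a clam. Since $P\cup Q$ has six vertices and four edges and the drawing is good, all crossings are between an edge of $P$ and an edge of $Q$; a short Euler‑formula computation then shows that a connected drawing of $P\cup Q$ with exactly two faces has exactly two crossings, and that the only crossing pattern compatible with the infinite face being incident with the internal vertices of $P$ and $Q$ and with no others is the one in Figure~\ref{fig:forb}: writing the internal vertices of $P,Q$ as $p,q$ and their leaves as $p_1,p_2$ and $q_1,q_2$, the edge $f_1:=pp_1$ crosses $g_1:=qq_1$ at a point $c$, and $f_2:=pp_2$ crosses $g_2:=qq_2$ at a point $d$. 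Planarizing, the boundary between the two faces is a quadrilateral $\gamma$ whose four sides are, in cyclic order, the sub‑arc of $f_1$ from $c$ to $p$, the sub‑arc of $f_2$ from $p$ to $d$, the sub‑arc of $g_2$ from $d$ to $q$, and the sub‑arc of $g_1$ from $q$ to $c$; moreover the four leaves $p_1,p_2,q_1,q_2$ all lie in the bounded region enclosed by the Jordan curve $\gamma$.

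Now suppose $\dd$ is pseudolinear, and let $L_1,L_2,M_1,M_2$ be the pseudolines of the arrangement containing $f_1,f_2,g_1,g_2$. Since any two pseudolines of an arrangement cross exactly once, and a point lying on two of them is necessarily that unique crossing, the clam picture forces $L_1\cap L_2=\{p\}$, $M_1\cap M_2=\{q\}$, $L_1\cap M_1=\{c\}$, $L_2\cap M_2=\{d\}$, and $L_2\cap M_1=\{y\}$ for some point $y$. I would then use that, in the $\real^2$ model, each pseudoline is a simple curve leaving every bounded region. Let $r$ be the sub‑arc of $L_2$ running from $p_2$ to infinity on the side away from $p$. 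As $p_2$ lies inside $\gamma$ and $r$ is unbounded, $r$ meets $\gamma$ an odd number of times. On the other hand $r\subseteq L_2$, so $r$ meets each side of $\gamma$ only inside the intersection of $L_2$ with the pseudoline carrying that side: it misses the $f_2$‑side (a sub‑arc of $L_2$ disjoint from $r$), misses the $f_1$‑side and the $g_2$‑side (whose only candidate points, $p$ and $d$, are not on $r$), and meets the $g_1$‑side only possibly at $y$. Hence $r\cap\gamma=\{y\}$ and $y$ lies on the $g_1$‑side of $\gamma$. Running the symmetric argument with the sub‑arc $r'$ of $M_1$ from $q_1$ to infinity on the side away from $q$ shows in the same way that $r'\cap\gamma=\{y\}$ with $y$ on the $f_2$‑side of $\gamma$. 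But the $f_2$‑side (from $p$ to $d$) and the $g_1$‑side (from $q$ to $c$) are opposite sides of the quadrilateral $\gamma$, hence disjoint, so $y$ cannot lie on both. This contradiction proves that $\dd$ is not pseudolinear.

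The step I expect to be the crux is the parity argument that pins the single crossing $L_2\cap M_1$ simultaneously onto two disjoint sub‑arcs of $\gamma$. Making it airtight requires: (i) reading off the forced coincidences $L_1\cap L_2=\{p\}$, etc., correctly from the arrangement axioms together with the drawing; (ii) the Jordan‑curve parity principle, which needs the crossings involved to be transversal — fine, since pseudolines cross rather than touch — and needs $p_2,q_1$ to lie strictly inside $\gamma$ while $r,r'$ escape to the unbounded region; and (iii) checking that $p,q,c,d$ are genuinely distinct (hence also that $p,d\notin r$ and $c,q\notin r'$), which is immediate from the arrangement axioms and from $c,d$ being crossings while $p,q$ are vertices. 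The preliminary identification of the clam in the first paragraph is routine, but must be carried out, since the rest of the argument depends on knowing exactly which sub‑arc of which edge forms each side of $\gamma$.
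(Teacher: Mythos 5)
Your proof is correct and follows essentially the same route as the paper: extend the pseudolines beyond the leaf endpoints trapped inside the outer boundary, observe that each such extension must escape through the boundary quadrilateral yet can only do so at a crossing with the one pseudoline it has not met, and contradict the fact that two pseudolines of an arrangement cross exactly once. The only difference is cosmetic: you work with the single pair $L_2,M_1$ and pin its unique crossing onto two disjoint sides of the quadrilateral, whereas the paper runs the escape argument for all four pseudolines and counts $8>\binom{4}{2}$ intersection points.
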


\begin{proof}
It clearly suffices to show that the restriction $\dd'$ of $\dd$ to $P\cup Q$ is not pseudolinear. Without any loss of generality we may assume that $\dd'$ is as shown in Figure~\ref{fig:forb}.

By way of contradiction, suppose that $\dd'$ is pseudolinear. Thus there exists a disc $C$ that contains $\dd'$, such that in the projective plane that results by identifying antipodal points of $C$, there is a pseudoline arrangement $\{\ell_1,\ell_2,\ell_3,\ell_4\}$ where $\ell_i$ contains $e_i$ for $i=1,2,3,4$. Since $s$ is not incident with the infinite region of $\dd'$, it follows that $\ell_1$ must intersect the boundary of the infinite region at some point in $e_4$ between $v$ and $y$ (if the intersection occurred elsewhere, $\ell_1$ would intersect another pseudoline more than once). Totally analogous arguments show that $\ell_2$ intersects $e_3$ at some point between $v$ and $x$; $\ell_3$ intersects $e_2$ at some point between $u$ and $y$; and $\ell_4$ intersects $e_1$ at some point between $u$ and $x$. Together with $u, x, v$, and $y$, this gives $8$ intersections between the $4$ pseudolines, contradicting that any pseudoline arrangement with $4$ pseudolines has $\binom{4}{2}=6$ intersection points.
\end{proof}

\begin{proof}[Proof of Theorem~\ref{thm:fourandmore}]
Consider the graph $G$ drawn in Figure~\ref{fig:diam}. The edges drawn as thick, continuous segments are {\em heavy}. The other edges (the dotted ones) are {\em light}. 
We regard the drawing $\dd$ of $G$ in Figure~\ref{fig:diam} as a drawing in the sphere $\sphe^2$. We say that a drawing of $G$ (in either $\sphe^2$ or $\real^2$) is {\em clean} if no heavy edge is crossed. 

\bigskip
\noindent{\bf Claim.} {\em No clean drawing of $G$ in $\real^2$ is pseudolinear.}\\

%\begin{proof}
\noindent{\em Proof.}
Up to isomorphism, there are exactly two clean drawings of $G$ in $\sphe^2$, which correspond to the two different embeddings of the subgraph of $G$ induced by the heavy edges. One of these clean drawings is $\dd$, and the other one, which we call $\dd'$, is obtained from $\dd$ simply by a Whitney switching on $\{a,b\}$; thus $\dd'$ can be obtained from $\dd$ simply by the relabellings $v_1\lar v_2, v_3 \lar v_4, v_5 \lar v_6, f_1 \lar f_3$, and $f_2 \lar f_4$.

\begin{center}
\begin{figure}[ht!]
\scalebox{0.75}{\input{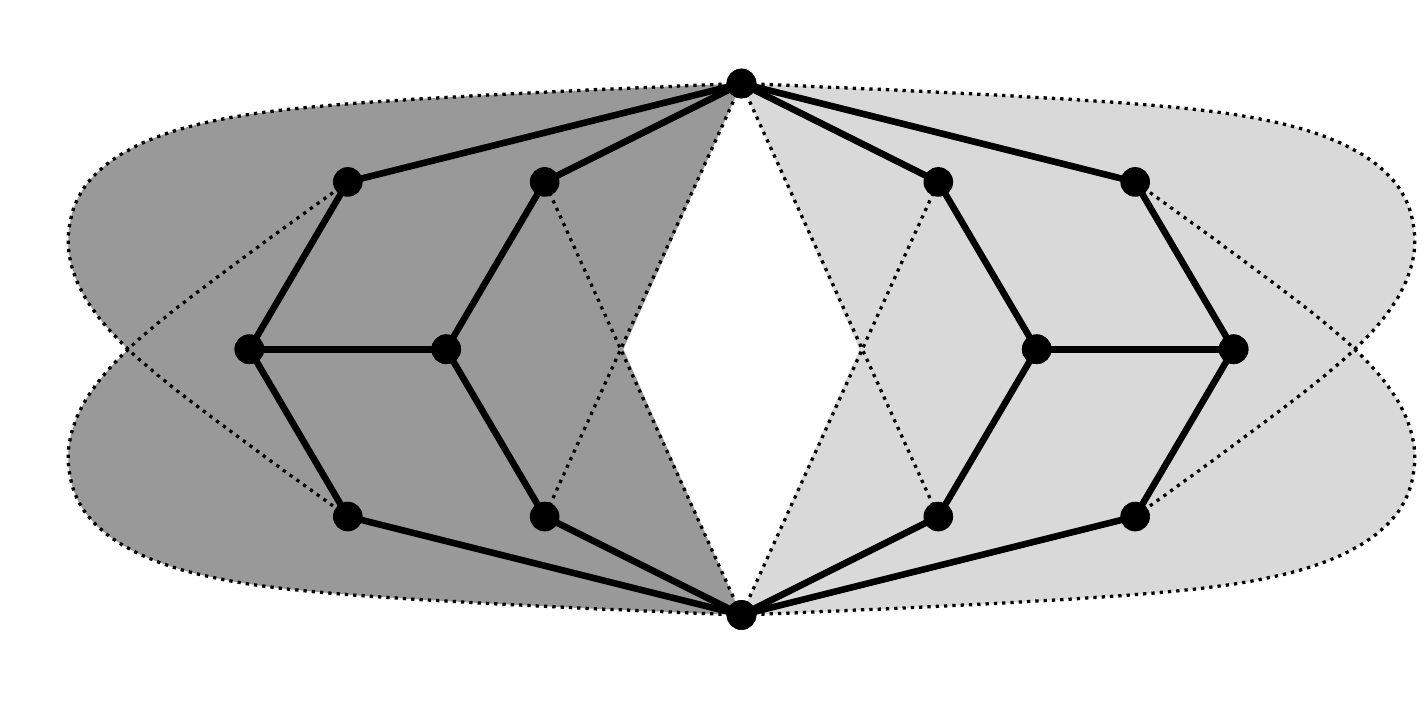_t}}
\caption{The spherical drawing $\dd$.}
\label{fig:diam}
\end{figure}
\end{center}

Let $\dd_{\real^2}$ be a clean drawing of $G$ in $\real^2$. Clearly $\dd_{\real^2}$ can be obtained from a clean drawing of $G$ in $\sphe^2$ (that is, either $\dd$ or $\dd'$) by removing a point from a region (yielding the in\-fi\-nite region of $\dd_{\real^2}$), which we call the {\em special} region (of $\dd$ or $\dd'$). We suppose that $\dd_{\real^2}$ is obtained from $\dd$; a totally analogous argument is applied if $\dd_{\real^2}$ is obtained from $\dd'$.

We refer to the drawing $\dd$ in Figure~\ref{fig:diam}. If the special region is outside the darkly shaded area, then the restriction of $\dd_{\real^2}$ to the paths $u_5 a u_6$ and $u_1 b u_2$ is a clam; in this case $\dd_{\real^2}$ is not pseudolinear, by Proposition~\ref{pro:clam}. If the special region is  outside the lightly shaded area, then the restriction of $\dd_{\real^2}$ to the paths $v_5 a v_6$ and $v_1 b v_2$ is a clam; thus also in this case $\dd_{\real^2}$ is not pseudolinear, by Proposition~\ref{pro:clam}. We conclude that if $\dd_{\real^2}$ were pseudolinear, then the special region would have to be contained in {\em both} shaded areas. Since obviously no region satisfies this, we conclude that $\dd_{\real^2}$ is not pseudolinear.
\bqed \\
%\end{proof}

Let $G'$ be obtained by substituting each heavy edge by $m$ pairwise internally disjoint $2$-paths, and the edge $e_1$ by $k-3$ pairwise internally disjoint $2$-paths $P_1, P_2, \ldots, P_{k-3}$. By the Claim, in every pseudolinear drawing of $G$ some heavy edge is crossed. It follows that in every pseudolinear drawing of $G'$ at least $m$ edges are crossed, and so  $\pcr{G'} \ge m$. Since in a drawing of $G$ isomorphic to neither $\dd$ nor $\dd'$ some heavy edge is crossed, it follows that a drawing of $G'$ with fewer than $m$ crossings has $e_3$ crossing $e_4$, $f_3$ crossing $f_4$, $f_1$ crossing $f_2$, and $e_2$ crossing one edge of each path $P_i$, for $i=1,2,\ldots,k-3$. Thus such a drawing has at least $1+1 + 1 + (k-3) = k$ crossings, and so $\ucr{G'}\ge k$. Since a drawing of $G'$ with exactly $k$ crossings is  obtained from $\dd$ by drawing all the paths $P_i$ very close to $e_1$, we obtain $\ucr{G'} \le k$. Thus $\ucr{G'}=k$.
\end{proof}

% ***********************************************************************************

\section{Separating $\trcr$ from $\tpcr$: proof of Theorems~\ref{thm:pseu-rec} and~\ref{thm:same}}\label{sec:pseu-rec}

To prove Theorems~\ref{thm:pseu-rec} and~\ref{thm:same} we proceed as suggested by Schaefer in~\cite{survey}. We make use of weighted graphs, whose definition and main properties are reviewed in Section~\ref{sub:wei}.  We start with a pseudoline arrangement $\aa$, and construct from $\aa$ a parameterized (by an integer $m\ge 1$) family of weighted graphs $(\ga,w_m)$; this is done in Section~\ref{sub:cons}. We then determine $\pcr{\ga,w_m}$, and bound by below $\rcr{\ga,w_m}$ (Section~\ref{sub:esti}). The key property (cf.~Propositions~\ref{pro:claima} and~\ref{pro:claimb}) is that $\rcr{\ga,w_m}$ is strictly greater than $\pcr{\ga,w_m}$ if and only if $\aa$ is non-stretchable.  Theorems~\ref{thm:pseu-rec} and~\ref{thm:same} then follow easily (Section~\ref{sub:proofs}).

\subsection{Weighted graphs and crossing numbers}\label{sub:wei}

We make essential use of weighted graphs, a simple device exploited in several crossing number constructions (see for instance~\cites{devos,dvorak}).

We recall that a {\em weighted} graph is a pair $(G,w)$, where $G$ is a graph and $w$ is a {\em weight function} $w:E(G) \to \natu$. A drawing of $(G,w)$ is simply any drawing of $G$, but the caveat is that in a drawing $\dd$ of $(G,w)$, a crossing between edges $e,f$ contributes $w(e)w(f)$ to the {\em weighted} crossing number $\ucr{\dd}$ of $\dd$. The {\em weighted crossing number} $\ucr{G,w}$ of $(G,w)$ is then the minimum $\ucr{\dd}$ over all drawings $\dd$ of $(G,w)$. (The weighted pseudolinear and rectilinear crossing numbers are analogously defined). Weighted graphs are a useful artifice for many crossing number related constructions, via the idea that $(G,w)$ can be turned into an ordinary, simple graph $G'$ by replacing each edge $e$ with a collection $\pp(e)$ of $w(e)$ internally disjoint $2$-paths with the same endpoints as $e$. We say that $G'$ is the simple graph {\em associated to} the weighted graph $(G,w)$. 

\begin{prop}\label{pro:canbe}
Let $(G,w)$ be a simple weighted graph, and let $G'$ be its associated simple graph. Then:
\begin{description}
\item{(a)} $\ucr{G,w} = \ucr{G'}$.
\item{(b)} $\pcr{G,w} = \pcr{G'}$.
\item{(c)} $\rcr{G,w} = \rcr{G'}$.
\end{description}
\end{prop}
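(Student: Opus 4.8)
The plan is to prove all three parts by the same two-sided inequality argument, exploiting the fact that a drawing of the weighted graph $(G,w)$ and a drawing of its associated simple graph $G'$ can be converted into one another with control over crossings. I will carry out the argument for part~(a), and then indicate why the same reasoning works verbatim for (b) and (c) with ``drawing'' replaced by ``pseudolinear drawing'' or ``rectilinear drawing.''

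First I would prove $\ucr{G'} \le \ucr{G,w}$. Take an optimal drawing $\dd$ of $(G,w)$. For each edge $e$ of $G$, draw the $w(e)$ internally disjoint $2$-paths of $\pp(e)$ in a thin tubular neighborhood of the arc representing $e$, all following $e$ closely and pairwise non-crossing (and placing the new degree-$2$ vertices so that adjacency at the endpoints of $e$ is respected). A crossing of $e$ with $f$ in $\dd$ then becomes exactly $w(e)\cdot w(f)$ crossings in the resulting drawing of $G'$, one for each choice of a path from $\pp(e)$ and a path from $\pp(f)$; no other crossings are created. Hence this drawing of $G'$ has exactly $\ucr{\dd}=\ucr{G,w}$ crossings, giving the inequality. (Here I implicitly use that the optimal drawing may be taken good, as noted in the paper's preliminaries, so adjacent edges do not cross and this neighborhood construction is clean.)

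Next I would prove $\ucr{G,w} \le \ucr{G'}$, which is the slightly more delicate direction and the step I expect to be the main obstacle, since an optimal drawing of $G'$ need not have the ``parallel bundle'' structure. Take an optimal drawing $\dd'$ of $G'$. For each edge $e$ of $G$ with $w(e)\ge 1$, among the $w(e)$ paths in $\pp(e)$ choose one, say $P_e$, that participates in the fewest crossings with paths of $\pp(f)$, summed appropriately; more precisely, I would argue that there is a choice of one representative path $P_e \in \pp(e)$ for each $e$ such that the number of crossings between $P_e$ and $P_f$ is at most $\frac{1}{w(e)w(f)}$ times the total number of crossings between $\pp(e)$ and $\pp(f)$ in $\dd'$ — this follows by an averaging argument over the $w(e)w(f)$ pairs of representatives (and a standard argument lets us also assume these representatives are pairwise in ``good'' position, e.g. crossing each $P_f$ at most once, by a rerouting that does not increase crossings, since a $2$-path between fixed endpoints that crosses another such path twice can be rerouted to remove both crossings). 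Contracting each $P_e$ to a single edge $e$ then yields a drawing $\dd$ of $G$ in which the crossing between $e$ and $f$ (if any) is charged $w(e)w(f)$ in the weighted count, and by the averaging bound $\ucr{\dd}\le \sum_{e,f}(\text{crossings between }\pp(e)\text{ and }\pp(f)\text{ in }\dd') = \ucr{\dd'} = \ucr{G'}$. Combining the two inequalities gives (a).

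Finally, for (b) and (c) I would observe that both constructions preserve the relevant geometric restriction. In the first direction, if $\dd$ is pseudolinear (resp.\ rectilinear), the thin neighborhood of each edge $e$ can be taken to be a thin neighborhood of the pseudoline $\ell_e$ (resp.\ line) extending $e$, and each of the $w(e)$ copies can be drawn as a pseudoline (resp.\ line segment) nearly parallel to $\ell_e$; adjusting the arrangement slightly keeps it a pseudoline arrangement, so the drawing of $G'$ is pseudolinear (resp.\ rectilinear), giving $\pcr{G'}\le\pcr{G,w}$ and $\rcr{G'}\le\rcr{G,w}$. In the other direction, starting from a pseudolinear (resp.\ rectilinear) optimal drawing of $G'$, the chosen representative path $P_e$, after the good-position rerouting, lies on a single pseudoline (resp.\ line) of the arrangement witnessing pseudolinearity (resp.\ on a single line), because in a pseudolinear drawing each edge is already contained in a pseudoline; contracting $P_e$ to $e$ therefore produces a pseudolinear (resp.\ rectilinear) drawing of $G$, so $\pcr{G,w}\le\pcr{G'}$ and $\rcr{G,w}\le\rcr{G'}$. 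The one point requiring a little care here is that the rerouting of $P_e$ to good position must be performed within the pseudoline (resp.\ line) that already carries it, which is automatic since that pseudoline meets any other pseudoline of the arrangement at most once; so no rerouting is actually needed in the pseudolinear and rectilinear cases, which in fact makes (b) and (c) cleaner than (a).
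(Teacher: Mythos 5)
Your part (a) is essentially sound: the tubular-neighborhood construction for $\ucr{G'}\le\ucr{G,w}$ is exactly the paper's, and for the reverse inequality your averaging choice of representatives $P_e$ is a legitimate (in fact more explicit) alternative to the paper's assertion that some crossing-minimal drawing of $G'$ has each bundle $\pp(e)$ drawn as a tight parallel family. One small slip there: averaging only guarantees a choice of representatives for which the weighted sum $\sum_{\{e,f\}} w(e)w(f)\,\tucr(P_e,P_f)$ is at most the total number of bundle--bundle crossings; you cannot ask to be below the average simultaneously in every pair, but the summed inequality is all you actually use.

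The genuine gap is in the reverse directions of (b) and (c). Your key claim --- that the representative $P_e$ ``lies on a single pseudoline (resp.\ line) of the arrangement \dots because in a pseudolinear drawing each edge is already contained in a pseudoline'' --- is false: $P_e$ is a $2$-path, i.e.\ \emph{two} edges of $G'$ meeting at an internal vertex, and in a pseudolinear (resp.\ rectilinear) drawing of $G'$ each of these two edges extends to its \emph{own} pseudoline (resp.\ is its own segment). The path as a whole bends at the internal vertex and in general lies on no single pseudoline and on no single line. Hence contracting $P_e$ to $e$ produces a drawing of $G$ whose edges are bent arcs, and nothing in your argument shows this drawing is pseudolinear, let alone rectilinear; so $\pcr{G,w}\le\pcr{G'}$ and $\rcr{G,w}\le\rcr{G'}$ are not established, and the remark that ``no rerouting is actually needed'' in these cases rests entirely on this misidentification. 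The gap is not patched by simply straightening either: if you keep the vertex positions of an optimal rectilinear drawing of $G'$ and draw each $e=uv$ as the straight segment from $u$ to $v$, two such segments can cross even though the bundles $\pp(e)$ and $\pp(f)$ are disjoint in the drawing of $G'$ (a path of $\pp(e)$ may detour around an endpoint of $f$), so the weighted count can increase. This is precisely where the paper does its extra work: it argues that one may take a crossing-minimal pseudolinear (rectilinear) drawing of $G'$ in which each bundle $\pp(e)$ is drawn as $w(e)$ nearly parallel copies of a single suitably extendable (or straight) arc, and only then regards the bundle as one weighted edge; an argument of this structural kind is what your proposal is missing for (b) and (c).
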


\begin{proof}
Take a drawing $\dd$ in which $\ucr{G,w}$ is attained, and then, for each edge $e$ of $G$, draw the $w(e)$ $2$-paths in $\pp(e)$ sufficiently close to $e$ so that the following holds for all edges $e', e''$: a $2$-path of $\pp(e')$ crosses a $2$-path of $\pp(e'')$ if and only if $e'$ crosses $e''$ in $\dd$. This shows that $\ucr{G'} \le \ucr{G,w}$. For the reverse inequality, note that it is always possible to have a crossing-minimal drawing of $G'$ where the $2$-paths of $\pp(e)$ can be drawn sufficiently close to each other, so that a $2$-path in $\pp(e)$ crosses a $2$-path in $\pp(f)$ if and only if every $2$-path of $\pp(e)$ crosses every $2$-path of $\pp(f)$. It follows that we can regard the collection of $2$-paths $\pp(e)$ as a weighted edge. Thus $\ucr{G,w} \le \ucr{G'}$, %and pairwise uncrossed each other crosses a $2$-path of another collection $\pp(f)$ if and only if e 
and so (a) follows. For (b), we only need the additional observation that each collection $\pp(e)$ can be drawn so that each edge in $\pp(e)$ can be extended to a pseudoline, so that the final result is a pseudoline arrangement (see Figure~\ref{fig:howto}). The proof of (c) is totally analogous.
\end{proof}

\begin{figure}[ht!]
\centering
\scalebox{0.6}{\input{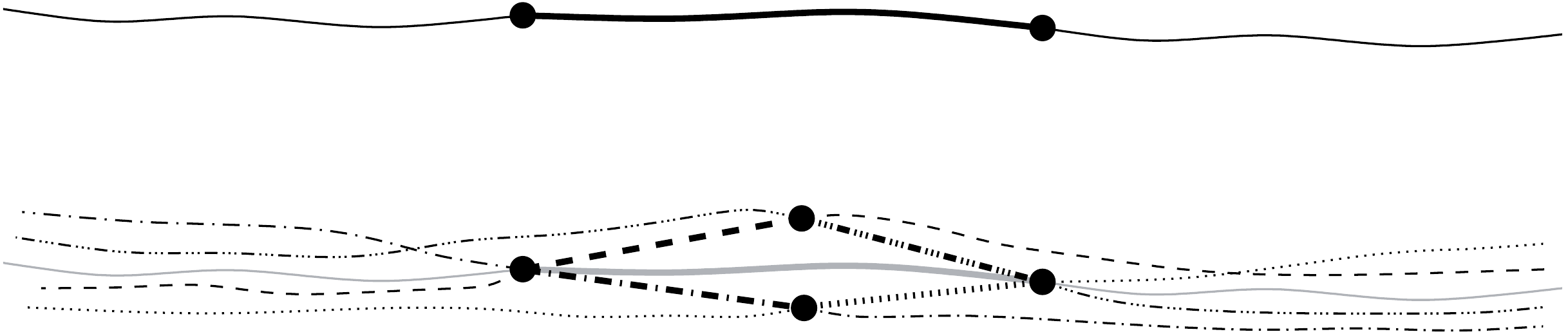_t}}
\caption{\footnotesize Above we show an edge $e$ of weight $2$ in a pseudolinear drawing of a weighted graph $(G,w)$; the extension of $e$ to a pseudoline is also shown. Below we illustrate how to replace $e$ by $\pp(e)$ (two internally disjoint $2$-paths), and how to extend each of these $4$ edges to a pseudoline, so that the result is a pseudoline arrangement. By doing a similar operation on each edge of $(G,w)$, we obtain a pseudolinear drawing of a simple graph $G'$ such that $\pcr{G'}=\pcr{G,w}$.}
\label{fig:howto}
\end{figure}

\subsection{Construction of the graphs $(\ga,w_m)$}\label{sub:cons}

For each integer $m\ge 1$, we describe a construction of a weighted graph $(\ga,w_m)$, based on an (any) arrangement $\aa$ of pseudolines, presented as a {\em wiring diagram} (every arrangement of pseudolines can be so represented, as shown by Goodman~\cite{Go80}).  Let $s:=|\aa|$, and let $[s]=\{1,2,\ldots,s\}$. Suppose that the pseudolines of $\aa$ are labelled $\ell_1, \ell_2, \ldots, \ell_s$, according to the order in which they intersect a vertical line in the leftmost part of the wiring diagram (see Figure~\ref{fig:ringel} for the case in which $\aa$ is Ringel's non-stretchable arrangement of $9$ pseudolines).

\begin{figure}[ht!]
\centering
\scalebox{0.2}{\input{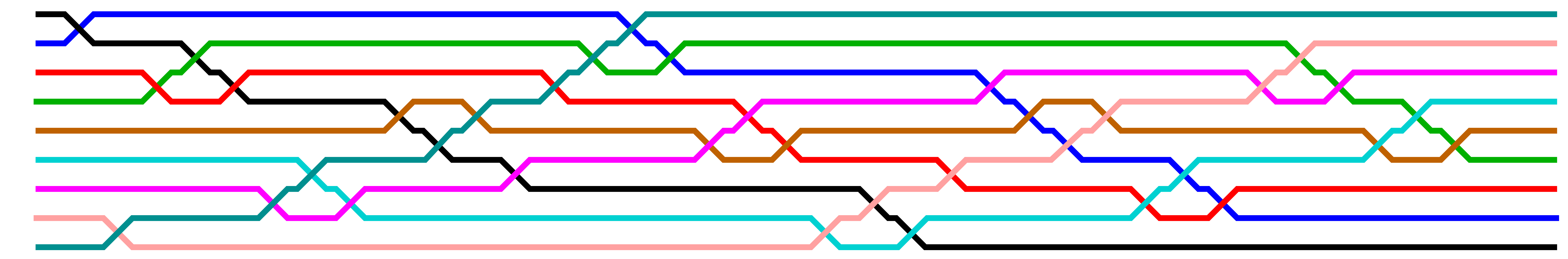_t}}
\caption{Ringel's non-stretchable pseudoline arrangement $\rr$, as a wiring diagram.}
\label{fig:ringel}
\end{figure}

For each $i\in [s]$ add two copies of $\ell_i$, drawn very close to $\ell_i$: a pseudoline $\ell_i'$ slightly above $\ell_i$, and another pseudoline $\ell_i''$ slightly below $\ell_i$. Then transform this into (a drawing of) a graph by converting each of the $3s$ left-hand side endpoints and each of the $3s$ right hand-side endpoints into (degree $1$) vertices, and by transforming into a degree $4$ vertex each crossing of an $\ell_i'$ with an $\ell_j''$. (The remaining $5\binom{s}{2}$ crossings are not converted into vertices).

Before continuing with the construction, we label some of the current objects. For each $i\in [s]$: (i) label $a_i$ (respectively, $b_i$) the degree $1$ vertex on the left (respectively, right) hand side incident with $\ell_i$; (ii) label $u_i$ (respectively, $y_i$) the degree $1$ vertex on the left (respectively, right) hand side incident with $\ell_i'$; and (iii) label $v_i$ (respectively, $z_i$) the degree $1$ vertex on the left (respectively, right) hand side incident with $\ell_i''$. Thus for each $i\in [s]$, there is an edge $e_i$ joining $a_i$ to $b_i$ ($\ell_i$ is the arc representing $e_i$); there is a path $P_i$ joining $u_i$ to $y_i$ ($\ell_i'$ is the drawing of this path); and there is a path $Q_i$ joining $v_i$ to $z_i$ ($\ell_i''$ is the drawing of this path). 

Now add the necessary edges to obtain a cycle $C = v_1 a_1 u_1 v_2 a_2$  $u_2 \cdots v_s a_s u_s y_1 b_1 z_1 y_2 b_2 z_2 \cdots $ $\cdots y_s b_s z_s$. Finally, add two vertices $a,b$, and make $a$ adjacent to $a_i, u_i$, and $v_i$ for every $i\in[s]$, and make $b$ adjacent to $b_i, y_i$, and $z_i$ for every $i\in[s]$. Let $\ga$ denote the constructed graph. To help comprehension, we color {\em black} the edges that are either in $C$ or incident with $a$ or $b$; color {\em blue} the edges in $\cup_{i=1}^s P_i \cup Q_i$; and {\em red} the edges $e_1, e_2, \ldots, e_s$. In Figure~\ref{fig:showing} we illustrate how to turn an arrangement (wiring diagram) $\aa$ of $2$ pseudolines into the graph $\ga$.

Now for each positive integer $m$, we turn $\ga$ into a weighted graph $(\gam)$ as follows. Assign to each black edge a weight of $k:=\binom{s}{2}(1+4m)+2m$; assign to each blue edge a weight of $m$; and assign to each red edge a weight of $1$.

\begin{figure}[ht!]
\centering
\scalebox{0.95}{\input{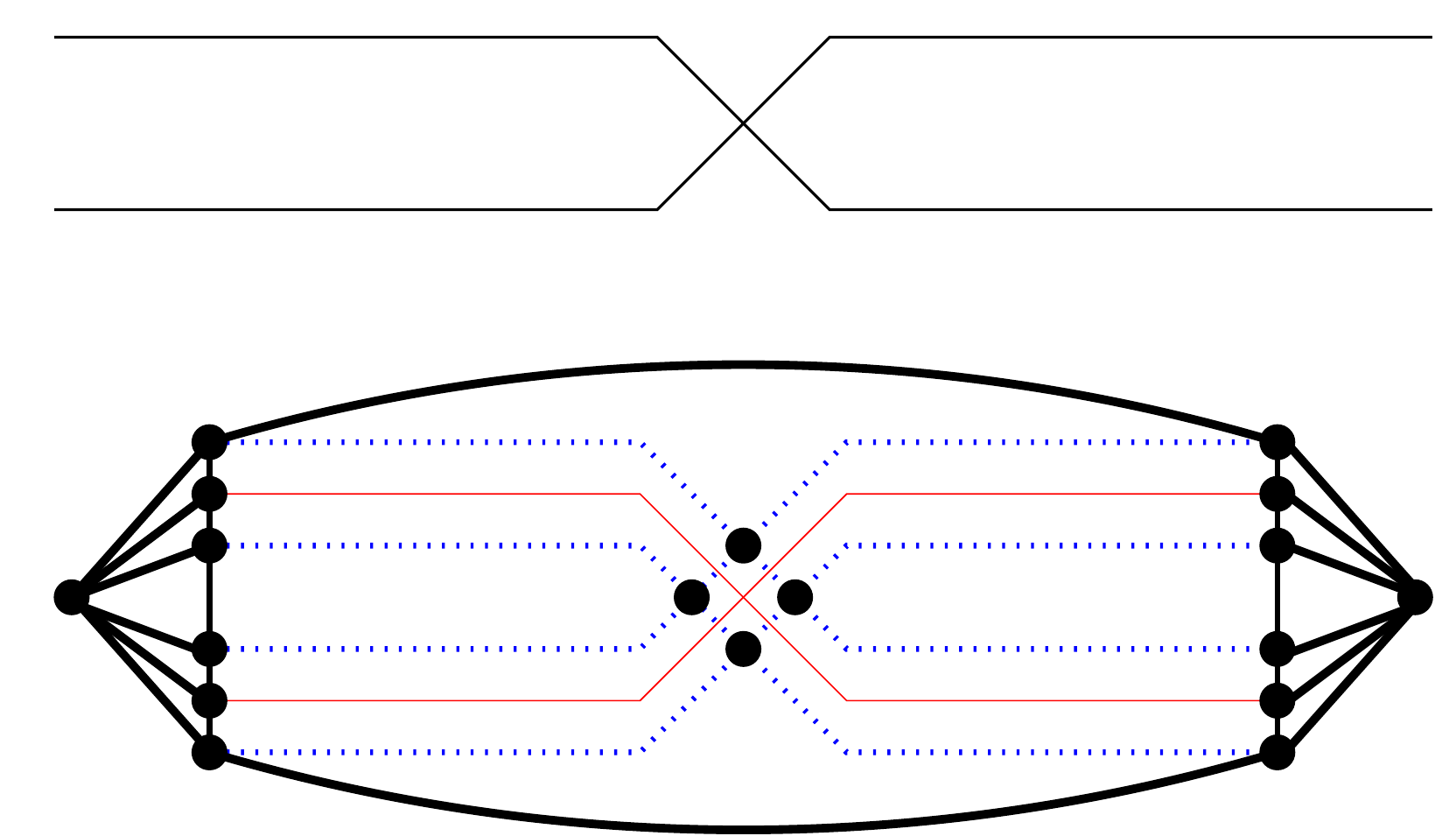_t}}
\caption{Let $\aa$ be the arrangement with two pseudolines $\ell_1, \ell_2$ given above as a wiring diagram. Below we draw the graph $\ga$. The red edges $e_1$ and $e_2$ are drawn as thin, continuous arcs; the blue edges are dotted; and the black edges are thick.}
\label{fig:showing}
\end{figure}

\subsection{Determining $\pcr{\gam}$ and bounding $\rcr{\gam}$}\label{sub:esti}

First we determine $\pcr{\gam}$, and then we find a lower bound for $\rcr{\gam}$.

\begin{prop}\label{pro:claima}
$\pcr{\gam}=\binom{s}{2}(1+4m)$. If $\aa$ is stretchable, then
$\rcr{\gam}$ also equals $\binom{s}{2}(1+4m)$. 
\end{prop}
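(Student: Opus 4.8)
The plan is to sandwich $\pcr{\gam}$ between $\binom{s}{2}(1+4m)$ from above and from below, and to observe that the upper‑bound drawing can be straightened once $\aa$ is stretched.

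\emph{Upper bound.} I would take the drawing $\dd_\aa$ of $(\gam)$ read off the wiring diagram: draw $e_i$ along $\ell_i$, draw $P_i$ along $\ell_i'$ and $Q_i$ along $\ell_i''$, turn each crossing of an $\ell_i'$ with an $\ell_j''$ into its degree‑$4$ vertex, and add the cycle $C$ together with the edges joining $a$ and $b$ to $C$ without creating any crossing, as in Figure~\ref{fig:showing}. Since a wiring diagram is a genuine pseudoline arrangement, $\dd_\aa$ is pseudolinear: $e_i$ extends to $\ell_i$, the $P_i$'s and $Q_i$'s extend to suitable small perturbations of the $\ell_i'$ and $\ell_i''$, and the whole family is an arrangement with no point of multiplicity larger than $2$ off the vertex set. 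A direct inspection then shows that the crossings of $\dd_\aa$ are exactly the $\binom{s}{2}$ red--red crossings $e_i\times e_j$ (weight $1$) and, for each ordered pair $i\ne j$, the red--blue crossings $e_i\times P_j$ and $e_i\times Q_j$ (weight $m$); no black edge is crossed, and the near‑copies can be routed so that blue paths meet only at the prescribed degree‑$4$ vertices. Hence $\ucr{\dd_\aa}=\binom{s}{2}\cdot 1+4\binom{s}{2}\cdot m=\binom{s}{2}(1+4m)$, giving $\pcr{\gam}\le\binom{s}{2}(1+4m)$. If $\aa$ is stretchable, replace the underlying arrangement by a straight realisation, its near‑copies by parallel‑ish straight lines, and redraw $C$, $a$, $b$ with straight segments in the unbounded part of the picture; this produces a rectilinear drawing with the same crossings, so $\rcr{\gam}\le\binom{s}{2}(1+4m)$ as well.

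\emph{Lower bound.} Let $\dd$ be an arbitrary drawing of $(\gam)$. Each black edge has weight $k=\binom{s}{2}(1+4m)+2m>\binom{s}{2}(1+4m)$, so if a black edge is crossed we are done; thus assume $\dd$ is \emph{clean}. The black subgraph $B$ (the cycle $C$ with $a$, $b$ and their incident edges) is $2$‑connected and therefore has an essentially unique embedding in $\sphe^2$; inspecting it shows that $a,b$ lie in one face of $C$, that the other face $F$ of $C$ is a disc, and that every red and blue edge (and every degree‑$4$ vertex) must lie inside $F$, since any red or blue object entering a face incident to $a$ or $b$ would cross a black edge. Reading off the cyclic order of the $6s$ endpoints along $\partial F=C$ and applying a parity (Jordan‑curve) argument, one finds that for every $i\ne j$ the edge $e_i$ must cross $e_j$ and must cross each of $P_j$ and $Q_j$. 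This already gives at least $\binom{s}{2}$ red--red crossings of weight $1$ and at least $4\binom{s}{2}$ red--blue crossings of weight $m$, so $\ucr{\dd}\ge\binom{s}{2}(1+4m)$. Together with the upper bound this yields $\pcr{\gam}=\binom{s}{2}(1+4m)$; and since every rectilinear drawing is pseudolinear, $\rcr{\gam}\ge\pcr{\gam}=\binom{s}{2}(1+4m)$, which combined with the rectilinear construction above gives $\rcr{\gam}=\binom{s}{2}(1+4m)$ when $\aa$ is stretchable.

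\emph{Where the work is.} The crossing counts and the unique‑embedding claim for $B$ are routine. The step that needs care — and that I expect to be the main obstacle — is the simultaneous control of the blue paths: in the upper bound, checking that the near‑copies $\ell_i'$, $\ell_i''$ can be drawn all at once so that the \emph{only} blue crossings are the prescribed degree‑$4$ vertices, so that $\dd_\aa$ attains precisely $\binom{s}{2}(1+4m)$ and nothing more; and, dually, in the lower bound, the bookkeeping along $\partial F$ that determines exactly which crossings are forced. This last step uses the cyclic structure of $C$ in an essential way and is where one must be careful that the constant comes out to exactly $\binom{s}{2}(1+4m)$.
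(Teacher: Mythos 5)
Your proof is correct and follows essentially the same route as the paper's: the wiring-diagram drawing yields the upper bound, the heavy black weights force a clean drawing in which the alternation of endpoints along $C$ forces all the red--red and red--blue crossings, and stretchability lets the same drawing be realized with straight segments. The one point you single out as delicate---that the blue paths meet only at vertices in the constructed drawing---is exactly what the construction provides, since (as the count of $5\binom{s}{2}$ unconverted crossings indicates) every intersection between copies of distinct pseudolines is turned into a degree-$4$ vertex of $\ga$, so no blue--blue crossing of weight $m^2$ ever arises.
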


\begin{proof}
It is not difficult to verify that the drawing of $(\gam)$ described in the construction is pseudolinear. We claim that this drawing has exactly $\binom{s}{2}(1+4m)$ crossings. Indeed, for all $i,j\in [s]$, $i\neq j$, edges $e_i$ and $e_j$ cross each other, yielding $\binom{s}{2}$ crossings. Also, each red edge crosses $2(s-1)$ blue edges (for all $i,j\in [s], i\neq j$, the edge $e_i$ crosses both $P_j$ and $Q_j$). Since each blue-red crossing contributes $m$ to the crossing number, we have in total $\binom{s}{2} + s\cdot 2(s-1) \cdot m = \binom{s}{2}(1 + 4m)$ crossings. Thus  $\pcr{\gam} \le \binom{s}{2}(1+4m)$. 

Now let $\dd$ be a (not necessarily pseudolinear) crossing-minimal drawing of $(\gam)$. We note that since each black edge has weight greater than $\binom{s}{2}(1+4m)$, no black edge can be crossed in $\dd$. We may then assume without loss of generality that in $\dd$ the paths $P_i$ and $Q_i$, and the edges $e_i$, are all drawn inside the disk bounded by $C$.

Now for $i,j\in [s], i\neq j$, (i) the endpoints of $e_i$ and $e_j$ are alternating along $C$; (ii) the endpoints of $e_i$ and $P_j$ are alternating along $C$; and (iii) the endpoints of $e_i$ and $Q_j$ are alternating along $C$. Thus for all such $i,j$, $e_i$ crosses $e_j$, and $e_i$ crosses $P_j$ and also $Q_j$. Recalling again that blue-red crossings contribute $m$ to the crossing number, it follows that $\dd$ has at least  $\binom{s}{2} + s\cdot (s-1) \cdot 2m = \binom{s}{2}(1 + 4m)$ crossings. Thus  $\ucr{\gam}$ (and, consequently, $\pcr{\gam}$) is at least $\binom{s}{2}(1+4m)$. 

For the rectilinear crossing number part it suffices to prove that if $\aa$ is stretchable, then there is a rectilinear drawing of $(\gam)$ with exactly $\binom{s}{2}(1+4m)$ crossings. Suppose then that $\aa$ is stretchable. It is an easy exercise to show that then $e_1, e_2, \ldots, e_s$ can be drawn as straight lines in the plane so that each of them has one endpoint on the line $x=0$ and the other endpoint on the line $x=1$, so that the result is an arrangement isomorphic to $\aa$. It is then straightforward to add $P_i, Q_i, C, a,b$, and the edges incident with $a$ and $b$, so that every edge is a straight segment.
\end{proof}

\begin{prop}\label{pro:claimb}
If $\aa$ is non-stretchable, then $\rcr{\gam} \ge \binom{s}{2}(1+4m)+m$.
\end{prop}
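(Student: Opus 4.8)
The plan is to argue by contradiction: suppose $\aa$ is non-stretchable but $\rcr{\gam} \le \binom{s}{2}(1+4m)+m-1$. Take a crossing-minimal rectilinear drawing $\dd$ of $(\gam)$. First I would rule out black-edge crossings exactly as in the proof of Proposition~\ref{pro:claima}: each black edge has weight $k = \binom{s}{2}(1+4m)+2m$, which alone exceeds the assumed bound, so no black edge is crossed in $\dd$. In particular the cycle $C$ together with the edges at $a$ and $b$ is drawn without crossings, so it bounds regions, and (after a possible relabelling of the two sides) we may assume the red paths $e_i$ and the blue paths $P_i,Q_i$ all live inside the disk bounded by $C$, with their endpoints in the prescribed cyclic order on $C$. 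As in Proposition~\ref{pro:claima}, the alternation of endpoints forces, for every $i\neq j$, that $e_i$ crosses $e_j$, $e_i$ crosses $P_j$, and $e_i$ crosses $Q_j$; these mandatory crossings already account for $\binom{s}{2}+s(s-1)\cdot 2m = \binom{s}{2}(1+4m)$ weighted crossings. Hence under our assumption there are \emph{at most} $m-1$ further crossings, and in particular \emph{no} extra red–red or red–blue crossing can occur (each such would cost $\ge 1$, fine, but each blue–blue crossing costs $m^2 \ge m$, and a second red–blue crossing between the same pair would also be allowed only up to $m-1$ total extra weight). More importantly, no edge $e_i$ may cross any edge $e_j$, $P_j$ or $Q_j$ a \emph{second} time beyond the mandatory one, and no two blue paths $P_i,Q_i$ (or $P_i,P_j$, etc.) may cross at all, since a single blue–blue crossing already costs $m^2\ge m$.

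The heart of the argument is then to show that these combinatorial constraints force the $s$ red edges $e_1,\dots,e_s$ — which are genuine straight segments, being rectilinear — to realize, via their mutual crossing pattern, an arrangement of $s$ lines isomorphic to $\aa$, contradicting non-stretchability. Here is where the blue paths do their job: because $P_i$ is drawn just ``above'' and $Q_i$ just ``below'' the region where $e_i$ sits (this is dictated by the endpoint order on $C$ together with the fact that $P_i,Q_i$ cannot cross each other or any other blue path, and can only cross each $e_j$, $j\neq i$, once), the segment $e_i$ together with $P_i$ and $Q_i$ behaves like a thin ``wire''. For $i\neq j$, the three objects $e_i, P_i, Q_i$ must all cross $e_j$ in a short interval and in a consistent order — $P_i$, $e_i$, $Q_i$ along $e_j$ (or the reverse), since otherwise two blue paths would be forced to cross, or some pair would cross twice. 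Consequently the crossing of $e_i$ with $e_j$ encodes not just that they cross, but on which ``side'' each wire passes, i.e. the local order of the other wires. Tracking this along each $e_j$, one recovers, for every $j$, the cyclic/linear order in which the wires $\ell_i$ cross $\ell_j$ in $\aa$. That is, extending each straight segment $e_i$ to a full line $L_i$ produces a line arrangement whose combinatorial type (the order type / oriented matroid) agrees with the wiring diagram of $\aa$. But that line arrangement is a stretching of $\aa$, contradicting the hypothesis that $\aa$ is non-stretchable. Therefore $\rcr{\gam} \ge \binom{s}{2}(1+4m)+m$.

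The step I expect to be the main obstacle is precisely the middle one: carefully extracting from ``no extra blue crossings and at most one crossing per relevant pair'' the conclusion that the blue paths $P_i,Q_i$ genuinely sandwich $e_i$ in the correct local sense, so that the red segments recover the \emph{full} isomorphism type of $\aa$ (and not merely the abstract fact that every pair crosses). One has to be attentive to two things: (i) a straight segment $e_i$ has only two endpoints on $C$, so the two wires adjacent to it on $C$ constrain its two endpoints but the ``above/below'' structure in the interior has to be propagated using the blue paths rather than taken for granted; and (ii) the extension of the segments $e_i$ to full lines could in principle introduce spurious incidences at infinity, so one should phrase the final contradiction at the level of the induced order type on the $s$ segments restricted to the strip $0\le x\le 1$ (as in the construction in Proposition~\ref{pro:claima}), where ``segments realizing $\aa$'' is literally equivalent to ``$\aa$ is stretchable.'' Once the sandwiching is established, the counting and the appeal to non-stretchability are routine; I would also double-check the slack in the weights, namely that $m-1 < m^2$ and $m-1 < m$ suffice to kill, respectively, any blue–blue crossing and any repeated red–blue crossing, which is what makes the weight choice $w(\text{blue})=m$ exactly tight for the ``$+m$'' in the statement.
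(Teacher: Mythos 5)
Your overall frame matches the paper's proof: rule out black-edge crossings by weight, count the $\binom{s}{2}(1+4m)$ mandatory crossings forced by the alternation of endpoints along $C$, observe that a surplus below $m$ forbids every blue--blue crossing (cost $m^2\ge m$) and every crossing of $e_i$ with $P_i\cup Q_i$ (cost $m$), and then try to contradict the non-stretchability of $\aa$. The gap is exactly in the step you flag as the main obstacle, and it is a genuine one: the constraints you actually invoke (the cyclic order of endpoints on $C$, ``no blue--blue crossings'', and ``each relevant pair crosses exactly once'') cannot force the segments $e_1,\dots,e_s$ to realize the isomorphism type of $\aa$. Combinatorially distinct simple arrangements can have identical boundary data and identical ``every pair crosses once'' patterns; indeed, if your cited constraints sufficed to recover the order in which the wires cross each $\ell_j$ ``in $\aa$'', then every arrangement would be stretchable, since the endpoint pattern alone is always realizable by straight segments. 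So your sandwiching argument, as stated, only yields that the straight segments form \emph{some} simple arrangement, which is no contradiction.

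The missing ingredient is the piece of the construction you never use: the blue paths are not pairwise disjoint. By construction, any two of them (e.g.\ $P_i$ and $P_j$, and likewise $P_i,Q_j$ and $Q_i,Q_j$) meet in exactly one degree-$4$ vertex of $\ga$, the converted intersection of the corresponding auxiliary curves $\ell_i',\ell_i'',\ell_j',\ell_j''$, and the order in which these shared vertices occur along each blue path is part of the path structure of $\ga$ and coincides with the crossing order in the wiring diagram of $\aa$. Hence, once blue--blue \emph{crossings} are excluded by the weight budget, the drawing of $\bigcup_i P_i$ (and of $\bigcup_i Q_i$) is forced to be an arrangement isomorphic to $\aa$ in \emph{every} admissible drawing, not only in the reference one. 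This is how the paper finishes: since $e_i$ crosses neither $P_i$, $Q_i$ nor any black edge, it is trapped in the strip bounded by $P_i\cup Q_i$ and two arcs of $C$; the strips for $i$ and $j$ overlap in a single region whose position along strip $i$ is dictated by the shared vertices, so the (unique, by straightness) crossing of $e_i$ with $e_j$ lies there, and the segments inherit the local crossing orders of $\aa$, i.e.\ they form a straight-line arrangement isomorphic to $\aa$ --- the desired contradiction, worth the extra weight $m$ otherwise. With this ingredient inserted, your plan becomes essentially the paper's proof; your remaining side remarks (the weight slack, segments crossing at most once, extending segments to lines creating no new incidences among them) are fine and easy to justify, though note that ``no second $e_i$--$e_j$ crossing'' follows from straightness, not from the budget, since such a crossing costs only $1$.
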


\begin{proof}
Suppose that $\aa$ is non-stretchable. Let $\dd$ be a crossing-minimal rectilinear drawing of $(\gam)$. As in the proof of Proposition~\ref{pro:claima}, no black edge may be crossed in $\dd$, and we may assume without any loss of generality that all the paths $P_i, Q_i$, and all the edges $e_i$ are drawn inside the disk bounded by $C$. For each $i\in [s]$, the path $P_i$ cannot cross $Q_i$, as otherwise this would add at least $m^2$ crossings to the $\binom{s}{2}(1+4m)$ crossings already counted in the proof of Proosition~\ref{pro:claima}. On the other hand, if for every $i\in [s]$ the edge $e_i$ crosses neither $P_i$ or $Q_i$, then the drawing induced by $\cup_{i=1}^s P_i$ forms an arrangement isomorphic to $\aa$; the same conclusion holds for $\cup_{i=1}^s Q_i$. If no edge $e_i$ crosses $P_i\cup Q_i$, then every $e_i$ must be drawn inside the strip bounded by $P_i\cup Q_i$, and so it follows that the drawings of the edges $e_1, e_2, \ldots, e_s$ would form a straight line arrangement isomorphic to $\aa$, contradicting its nonstretchability. We conclude that for some $i\in [s]$, the edge $e_i$ must cross either $P_i$ or $Q_i$. In either case, the crossing contributes $m$ to $\rcr{\gam}$, in addition to the $\binom{s}{2}(1+4m)$ crossings already counted in the proof of Proposition~\ref{pro:claima}.
\end{proof}

\subsection{Proofs of Theorems~\ref{thm:pseu-rec} and~\ref{thm:same}}\label{sub:proofs}

\begin{proof}[Proof of Theorem~\ref{thm:pseu-rec}]
Let $\rr$ denote Ringel's non-stretch\-able arrangement with $9$ pseudolines. Theorem~\ref{thm:pseu-rec} follows at once using $(\gar)$, by combining Proposition~\ref{pro:canbe} (b) and (c) with Propositions~\ref{pro:claima} and~\ref{pro:claimb}.
\end{proof}

Let us denote {\sc PCN$\stackrel{?}{=}$RCN} the decision problem of determining if the pseudolinear crossing number and the rectilinear crossing number of an input graph are the same. Shor~\cite{Sh91} proved that {\sc Stretchability} (the problem of deciding if a pseudoline arrangement is stretchable) is NP-complete. By Mn\"ev's universality theorem~\cite{mnev}, it follows that {\sc Stretchability} is $\exists\real$-complete (cf.~\cite{Sch10}). We make a reduction to this problem to prove Theorem~\ref{thm:same}.  

\begin{proof}[Proof of Theorem~\ref{thm:same}]
We prove that {\sc Stretchability} $\propto$ {\sc PCN$\stackrel{?}{=}$RCN}. Let $\aa$ be a pseudoline arrangement, and consider the weighted graph $(\ga,w_1)$, which is clearly constructed from $\aa$ in polynomial time. Thus it suffices to prove that the answer to ``Is $\aa$ stretchable?'' is yes if and only if the answer to ``Is $\pcr{\ga,w_1} = \rcr{\ga,w_1}$?'' is yes. But this follows immediately from Propositions~\ref{pro:claima} and~\ref{pro:claimb}.
\end{proof}

% ********************************************************************************

\section{Concluding Remarks}\label{sec:cr}

In Theorem~\ref{thm:pseu-rec} we proved that there exist arbitrarily large graphs $G$ such that (roughly) $\rcr{G} \ge (145/144)\pcr{G}$. At the end of his survey~\cite{survey}, Schaefer asked if there is a function $f$ such that, for every graph $G$, $\rcr{G} \le f(\pcr{G})$. The existence (or not) of such an $f$ remains an important open question.

As Bienstock and Dean~\cite{BiDe93}, we make essential use of weighted graphs. Equivalently, we allow the existence of collections of internally disjoint $2$-paths with common endpoints; as a result we get simple (ordinary, unweighted) graphs, but these graphs are clearly not $3$-connected. Are these artifices really necessary to construct graphs with fixed crossing number and arbitrarily large rectilinear (or pseudolinar) crossing number? After unsuccessfully investigating this issue, we are willing to put forward the following.

\begin{conj}
There is a function $f$ such that for every $3$-connected graph $G$, $\rcr{G} \le f(\ucr{G})$.
\end{conj}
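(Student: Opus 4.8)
The plan is to follow the paradigm \emph{bounded crossing number $\Rightarrow$ almost planar $\Rightarrow$ rigid $\Rightarrow$ rectilinearly drawable with boundedly many extra crossings}, using $3$-connectivity exactly at the rigidity step. Set $k=\ucr{G}$ and fix a crossing-optimal drawing $\dd$ of $G$. Let $X\subseteq E(G)$ be the set of edges that participate in at least one crossing of $\dd$, so that $|X|\le 2k$, the subgraph $H:=G-X$ is drawn planarly, and every crossing of $\dd$ is between two edges of $X$. The first task is to record, as a finite amount of data whose size is bounded by a function of $k$, the \emph{combinatorial position} of $X$ relative to $H$: which face of $H$ each arc of each edge of $X$ traverses, the cyclic order in which the endpoints and traversals of edges of $X$ occur along the boundary of each face, and which pairs of edges of $X$ cross. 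Call this bounded data the \emph{pattern} of $\dd$.

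The second step is to convert $3$-connectivity of $G$ into a bound on the number of combinatorial planar embeddings of $H$ that can arise in \emph{any} drawing of $G$. Since $H$ differs from the $3$-connected graph $G$ by only $|X|\le 2k$ edges, $H$ cannot contain the devices that make its embedding floppy --- long chains of $2$-cuts, or large bundles of internally disjoint $2$-paths with common endpoints --- without each such device being ``pinned down'' by some edge of $X$. I would make this precise via the SPQR-decomposition of $H$: every $2$-cut of $H$ that survives as a $2$-cut of $G$ is forbidden, so the $S$- and $P$-structure of $H$ not rigidified by $X$ is of bounded size, and hence only $g(k)$ combinatorial embeddings of $H$ occur among drawings of $G$, for some function $g$. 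Combined with F\'ary's theorem, this says that, up to one of $g(k)$ choices, every rectilinear drawing of $G$ realizes $H$ with a prescribed rotation system while the vertices of $H$ may be moved freely subject to straightness.

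The third step --- and the genuine obstacle --- is to bound the crossings that the $\le 2k$ edges of $X$ are forced to create when each is drawn as a straight segment into a fixed combinatorial drawing of $H$. This is precisely where a stretchability-type obstruction could reappear: the segments of $X$ must be ``threaded'' through the faces of $H$, and this system plays the role of a partial line arrangement; if it were as expressive as an arbitrary pseudoline arrangement we would be thrown back to the situation of Theorem~\ref{thm:pseu-rec} and of~\cite{BiDe93}. The claim to be proved is that $3$-connectivity of $G$ forbids this expressiveness: because $H$ has no narrow channels (no bundles, no chains of $2$-cuts), each edge of $X$ is confined to a subregion of $H$ of bounded combinatorial complexity, so the number of forced crossings among $X$ and between $X$ and $H$ is bounded in terms of $k$. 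Quantifying ``no channels $\Rightarrow$ no non-stretchable configuration'' is the step I expect to be hard, and presumably the reason the conjecture remains open despite the authors' efforts.

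Given this difficulty, I would first attack two easier targets. The first is the case $\ucr{G}=4$, the smallest value for which $\rcr$ is unbounded over all graphs (Theorem~\ref{thm:fourandmore} and~\cite{BiDe93}): here one can try to adapt Bienstock and Dean's analysis of crossing-number-$4$ graphs, using $3$-connectivity to eliminate the Whitney switchings on which their lower bound depends. The second is to prove the weaker statement $\pcr{G}\le f(\ucr{G})$ for $3$-connected $G$, which isolates the ``avoid every obstruction to pseudolinearity (such as the clam of Proposition~\ref{pro:clam})'' part from the stretchability part, and whose resolution would already show that the phenomenon behind Theorem~\ref{thm:fourandmore} cannot occur in $3$-connected graphs.
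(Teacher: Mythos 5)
The statement you are trying to prove is not a theorem of the paper at all: it is stated as a \emph{conjecture}, which the authors explicitly pose after reporting that they investigated the question unsuccessfully. There is no proof in the paper to compare against, and your text is, by your own account, a research programme rather than a proof, so it cannot be accepted as one. The decisive gap is exactly the step you flag as ``the genuine obstacle'': you never show that the $\le 2k$ crossed edges can be inserted as straight segments into some straight-line drawing of $H=G-X$ with only $f(k)$ crossings, nor that $3$-connectivity excludes a stretchability-type obstruction there. Asserting that ``$H$ has no narrow channels, so each edge of $X$ is confined to a subregion of bounded combinatorial complexity'' is precisely the content of the conjecture, restated, not an argument for it.

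There are also concrete problems earlier in the plan. First, the ``pattern'' of the crossing-optimal topological drawing $\dd$ has no a priori bearing on rectilinear drawings: in a crossing-minimal \emph{rectilinear} drawing of $G$ the subgraph $H$ need not be drawn planarly, so the claim that ``every rectilinear drawing of $G$ realizes $H$ with a prescribed rotation system'' is unjustified; in any case, for an upper bound on $\rcr{G}$ you must \emph{construct} a good rectilinear drawing, and the only construction you sketch is F\'ary on $H$ plus straight insertion of $X$. But a straight segment between two already-placed vertices cannot be ``threaded'' through chosen faces --- its route is forced by the coordinates --- so bounding its crossings with $H$ requires choosing the F\'ary realization of $H$ with great care, and no argument is given that a suitable realization exists (this is again the heart of the difficulty; note that Theorem~\ref{thm:pseu-rec} and the construction of Section~\ref{sec:pseu-rec} show that for non-$3$-connected graphs no such choice exists in general). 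Second, the SPQR-based claim that only $g(k)$ embeddings of $H$ are relevant is plausible but unproven as stated: $2$-cuts of $H$ that are not $2$-cuts of $G$ are only ``covered'' by edges of $X$, and covering does not by itself rigidify the embedding. Your two proposed easier targets (the case $\ucr{G}=4$, and the weaker bound $\pcr{G}\le f(\ucr{G})$ for $3$-connected $G$) are reasonable directions, and the second in particular is consistent with the paper's discussion, but neither is carried out here. As it stands, the proposal contains no step that closes the conjecture.
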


%%% Do we know that separating mon-cr from pcr is also np-hard? Mmm....

%%% If we get with a ``better'' non-stretchable arrangement, can we get better results? Probably not ``much'' better, eh?

%%% How about  pcr ?= cr ??

%%% Can we characterize when a given drawing is pseudolinear?

\section*{Acknowledgements}

We thank Marcus Schaefer for several insightful suggestions and remarks.

% ******************** B I B L I O G R A P H Y *********************

\begin{bibdiv}
\begin{biblist}

\bib{Bi91}{article}{
   author={Bienstock, Daniel},
   title={Some provably hard crossing number problems},
   journal={Discrete Comput. Geom.},
   volume={6},
   date={1991},
   number={5},
   pages={443--459},
}

\bib{BiDe93}{article}{
   author={Bienstock, Daniel},
   author={Dean, Nathaniel},
   title={Bounds for rectilinear crossing numbers},
   journal={J. Graph Theory},
   volume={17},
   date={1993},
   number={3},
   pages={333--348},
}

\bib{CaMo13}{article}{
   author={Cabello, Sergio},
   author={Mohar, Bojan},
   title={Adding one edge to planar graphs makes crossing number and
   1-planarity hard},
   journal={SIAM J. Comput.},
   volume={42},
   date={2013},
   number={5},
   pages={1803--1829},
}

\bib{CaMo11}{article}{
   author={Cabello, Sergio},
   author={Mohar, Bojan},
   title={Crossing number and weighted crossing number of near-planar
   graphs},
   journal={Algorithmica},
   volume={60},
   date={2011},
   number={3},
   pages={484--504},
}

\bib{devos}{article}{
   author={DeVos, Matt},
   author={Mohar, Bojan},
   author={{\v{S}}{\'a}mal, Robert},
   title={Unexpected behaviour of crossing sequences},
   journal={J. Combin. Theory Ser. B},
   volume={101},
   date={2011},
   number={6},
   pages={448--463},
}

\bib{dvorak}{article}{
   author={Dvo{\v{r}}{\'a}k, Zden{\v{e}}k},
   author={Mohar, Bojan},
   title={Crossing-critical graphs with large maximum degree},
   journal={J. Combin. Theory Ser. B},
   volume={100},
   date={2010},
   number={4},
   pages={413--417},
}

\bib{fary}{article}{
   author={F{\'a}ry, Istv{\'a}n},
   title={On straight line representation of planar graphs},
   journal={Acta Univ. Szeged. Sect. Sci. Math.},
   volume={11},
   date={1948},
   pages={229--233},
}

\bib{GaJo83}{article}{
   author={Garey, M. R.},
   author={Johnson, D. S.},
   title={Crossing number is NP-complete},
   journal={SIAM J. Algebraic Discrete Methods},
   volume={4},
   date={1983},
   number={3},
   pages={312--316},
}

\bib{Go80}{article}{
   author={Goodman, Jacob E.},
   title={Proof of a conjecture of Burr, Gr\"unbaum, and Sloane},
   journal={Discrete Math.},
   volume={32},
   date={1980},
   number={1},
   pages={27--35},
}

\bib{Hl06}{article}{
   author={Hlin{\v{e}}n{\'y}, Petr},
   title={Crossing number is hard for cubic graphs},
   journal={J. Combin. Theory Ser. B},
   volume={96},
   date={2006},
   number={4},
   pages={455--471},
}

\bib{mnev}{article}{
  author={N.E.~Mn\"ev},
  title={The universality theorems on the classification problem of configuration varieties and convex polytopes varieties},
  conference={
    title={Topology and geometry—Rohlin Seminar}
  },
  book={
    series={Lecture Notes in Comput.~Sci.},
    volume={1346},
    publisher={Springer},
    place={Berlin},
    },
  date={1988},
  pages={527--543},
}

\bib{pachtoth}{article}{
   author={Pach, J{\'a}nos},
   author={T{\'o}th, G{\'e}za},
   title={Monotone crossing number},
   conference={
      title={Graph drawing},
   },
   book={
      series={Lecture Notes in Comput. Sci.},
      volume={7034},
      publisher={Springer, Heidelberg},
   },
   date={2012},
   pages={278--289},
}

\bib{pelsmajer}{article}{
   author={Pelsmajer, Michael J.},
   author={Schaefer, Marcus},
   author={{\v{S}}tefankovi{\v{c}}, Daniel},
   title={Crossing numbers of graphs with rotation systems},
   journal={Algorithmica},
   volume={60},
   date={2011},
   number={3},
   pages={679--702},
}

\bib{Ri55}{article}{
   author={Ringel, Gerhard},
   title={Teilungen der Ebene durch Geraden oder topologische Geraden},
   language={German},
   journal={Math. Z.},
   volume={64},
   date={1955},
   pages={79--102 (1956)},
}

\bib{Sch10}{article}{
   author={Schaefer, Marcus},
   title={Complexity of some geometric and topological problems},
   conference={
      title={Graph drawing},
   },
   book={
      series={Lecture Notes in Comput. Sci.},
      volume={5849},
      publisher={Springer},
      place={Berlin},
   },
   date={2010},
   pages={334--344},
}

\bib{survey}{article}{
   author={Schaefer, Marcus},
   title={The Graph Crossing Number and its Variants: A Survey},
   journal={Electron. J. Combin.},
   date={April 17, 2013},
   pages={Dynamic Survey 21, 90 pp.},
}

\bib{Sh91}{article}{
   author={Shor, Peter W.},
   title={Stretchability of pseudolines is NP-hard},
   conference={
      title={Applied geometry and discrete mathematics},
   },
   book={
      series={DIMACS Ser. Discrete Math. Theoret. Comput. Sci.},
      volume={4},
      publisher={Amer. Math. Soc.},
      place={Providence, RI},
   },
   date={1991},
   pages={531--554},
}

\bib{SnHe91}{article}{
   author={Snoeyink, Jack},
   author={Hershberger, John},
   title={Sweeping arrangements of curves},
   conference={
      title={Discrete and computational geometry (New Brunswick, NJ,
      1989/1990)},
   },
   book={
      series={DIMACS Ser. Discrete Math. Theoret. Comput. Sci.},
      volume={6},
      publisher={Amer. Math. Soc.},
      place={Providence, RI},
   },
   date={1991},
   pages={309--349},
}

\end{biblist}
\end{bibdiv}

\end{document}